\newtheorem{proposition}{Proposition}
\newtheorem{theorem}[proposition]{Theorem}
\newtheorem{lemma}[proposition]{Lemma}
\theoremstyle{remark}
\newtheorem{remark}[proposition]{Remark}
\theoremstyle{definition}
\numberwithin{equation}{section}
\numberwithin{proposition}{section}
\numberwithin{figure}{section}
\numberwithin{table}{section}
\newcommand{\N}{\mathbb{N}}
\newcommand{\Q}{\mathbb{Q}}
\newcommand{\R}{\mathbb{R}}
\newcommand{\E}{\mathbb{E}}
\renewcommand{\S}{\mathbf{S}}
\newcommand{\C}{\mathcal{C}}
\newcommand{\eps}{\varepsilon}
\renewcommand{\le}{\leqslant}
\renewcommand{\ge}{\geqslant}
\renewcommand{\leq}{\leqslant}
\renewcommand{\geq}{\geqslant}
\renewcommand{\subset}{\subseteq}
\renewcommand{\tilde}{\widetilde}
\renewcommand{\hat}{\widehat}
\newcommand{\Ll}{\left}
\newcommand{\Rr}{\right}
\renewcommand{\d}{\mathrm{d}}
\DeclareMathOperator{\tr}{tr}
\DeclareMathOperator{\supp}{supp}
\newcommand{\la}{\left\langle}
\newcommand{\ra}{\right\rangle}
\newcommand{\pert}{{\mathrm{pert}}}
\newcommand{\cF}{\mathcal{F}}
\newcommand{\geqpsd}{\succeq} \newcommand{\leqpsd}{\preceq}
\begin{document}

\author{Hong-Bin Chen}
\address{Institut des Hautes \'Etudes Scientifiques, France}
\email{hbchen@ihes.fr}

\keywords{Spin glass, Parisi formula, vector spin}
\subjclass[2010]{82B44, 82D30}

\title[Self-overlap correction simplifies the Parisi formula]{Self-overlap correction simplifies the Parisi formula for vector spins}

\begin{abstract}
We propose a simpler approach to identifying the limit of free energy in a vector spin glass model by adding a self-overlap correction to the Hamiltonian. This avoids constraining the self-overlap and allows us to identify the limit with the classical Parisi formula, similar to the proof for scalar models with Ising spins.
For the upper bound, the correction cancels self-overlap terms in Guerra's interpolation.
For the lower bound, we add an extra perturbation term to make the self-overlap concentrate, a technique already used in \cite{mourrat2021nonconvex,mourrat2023free} to ensure the Ghirlanda--Guerra identities. We then remove the correction using a Hamilton--Jacobi equation technique, which yields a formula similar to that in \cite{pan.vec}. Additionally, we sketch a direct proof of the main result in \cite{mourrat2020extending}.
\end{abstract}

\maketitle

\section{Introduction}

In \cite{pan.potts,pan.vec}, the limit of the free energy of mean-field vector spin glasses has been identified. One key insight is to consider the free energy with constrained self-overlap. Drawing inspiration from the Hamilton--Jacobi equation approach to spin glasses \cite{mourrat2022parisi,mourrat2020extending,mourrat2021nonconvex,mourrat2023free}, we present a simpler alternative approach. Specifically, we introduce a self-overlap correction to the Hamiltonian to simplify the analysis.

First, we employ the same argument used in \cite{pan} for the scalar model with Ising spins, along with Panchenko's synchronization technique, to identify the limit of the corrected free energy with the classical form of the Parisi formula (see Theorem~\ref{t.main}). Then, we remove the correction using a simple Hamilton--Jacobi equation technique (see Theorem~\ref{t.remove}).
Our approach also simplifies the analysis in the scalar model with soft spins \cite{pan05} as a special case.

\subsection*{Setting and main results}

Fix $D\in \N$ and let $P_1$ be a probability measure on $\R^D$. We assume that $P_1$ is supported on the unit ball $\{v\in \R^D: |v|\leq 1\}$. For each $N\in\N$, we define $P_N = (P_1)^{\otimes N}$ and denote the $\R^{D\times N}$-valued spin configuration sampled from $P_N$ by $\sigma = (\sigma_{i,j})_{i\in\{1,\dots, D\},\, j\in \{1,\dots,N\}}$. 

Throughout, we denote by $a^\intercal$ the transpose of a matrix or vector $a$. For two matrices or vectors $a$ and $b$ with the same size, we write $a\cdot b = \sum_{i,j}a_{i,j}b_{i,j} = \tr(ab^\intercal)$ as the Frobenius inner product between them, which naturally induces a norm $|a| = \sqrt{a\cdot a}$. 
We denote by $\S^D\subset \R^{D\times D}$ the set of symmetric matrices and $\S^D_+ \subset \S^D$ the set of symmetric positive semi-definite matrices. For $a,b\in\R^{D\times D}$, we write $a\geqpsd b$ and $b\leqpsd a$ if $a\cdot c\geq b\cdot c$ for all $c\in \S^D_+$.

Fix a differentiable and locally Lipschitz function $\xi:\R^{D\times D}\to \R$ satisfying $\xi\geq 0$ on $\S^D_+$, $\xi(0)=0$, and $\xi(a)=\xi(a^\intercal)$.
Using the Frobenius inner product on $\R^{D\times D}$, we can define the gradient $\nabla \xi:\R^{D\times D}\to \R^{D\times D}$ of $\xi$. We make the following additional assumption on $\xi$:
\begin{align}\label{e.assump_xi}
    a,b\in\S^D_+ \text{ and } a\geqpsd b \quad \Longrightarrow\quad  \xi(a) \geq \xi(b)\text{ and } \nabla\xi(a)\geqpsd \nabla\xi(b).
\end{align}

We assume that, for each $N$, there exists a real-valued centered Gaussian process $(H_N(\sigma))_{\sigma \in \R^{D\times N}}$ with covariance
\begin{align*}
    \E H_N(\sigma)H_N(\sigma') = N\xi\Ll(\frac{\sigma\sigma'^\intercal}{N}\Rr).
\end{align*}
Examples of $\xi$ and $H_N(\sigma)$ satisfying the above requirements are presented in \cite[Section~6]{mourrat2023free}. In particular, the mixed $p$-spin model with vector spins considered in \cite{pan.vec} is covered.

The limit of the free energy
\begin{align}\label{e.og_free_energy}
    \frac{1}{N}\E\log\int \exp\Ll(H_N(\sigma)\Rr) \d P_N(\sigma)
\end{align}
as $N\to\infty$ has been identified with a variational formula known as the Parisi formula in many settings.
We focus on the modified free energy with self-overlap correction:
\begin{align*}
    F_N = \frac{1}{N}\E\log\int\exp\Ll(H_N(\sigma) - \frac{1}{2}N\xi\Ll(\frac{\sigma\sigma^\intercal}{N}\Rr)\Rr)\d P_N(\sigma).
\end{align*}
The additional term $\frac{1}{2}N\xi\Ll(\frac{\sigma\sigma^\intercal}{N}\Rr)$ is one half of the variance of $H_N(\sigma)$. We view this as a correction of the \textit{self-overlap} $\frac{1}{N}\sigma\sigma^\intercal$ from the Hamiltonian. In Guerra's replica symmetry breaking interpolation, this correction term leads to the cancellation of terms involving the self-overlap.
This correction term already appeared in Mourrat's work \cite{mourrat2022parisi,mourrat2021nonconvex,mourrat2023free}.

We describe the Parisi functional.
For convenience, we use continuous versions of the Ruelle probability cascade (RPC) \cite{ruelle1987mathematical}.
Let $\mathfrak{R}$ be the RPC with overlap distributed uniformly on $[0,1]$ (see \cite[Theorem~2.17]{pan}). More precisely, $\mathfrak{R}$ is a random probability measure on the unit sphere of a separable Hilbert space with inner product denoted by $\wedge$ such that $\alpha^1\wedge \alpha^2$ distributes uniformly on $[0,1]$ under $\E\mathfrak{R}^{\otimes 2}$ and $\Ll(\alpha^l\wedge\alpha^{l'}\Rr)_{l,l'\in\N}$ satisfies the Ghirlanda--Guerra identities where $\Ll(\alpha^l\Rr)_{l\in\N}$ are i.i.d.\ samples from $\mathfrak{R}^{\otimes\infty}$.

Let $\Pi$ be the collection of left-continuous function $\pi:[0,1]\to \S^D_+$ that is increasing in sense that $\pi(s) \geqpsd  \pi(s')$ if $s\geq s'$.
For each $\pi\in\Pi$, let $(w^\pi(\alpha))_{\alpha\in\supp\mathfrak{R}}$ be a centered $\R^D$-valued Gaussian process with covariance
\begin{align}\label{e.w^pi}
    \E w^\pi(\alpha)\Ll(w^\pi(\alpha')\Rr)^\intercal = \pi(\alpha\wedge\alpha'),
\end{align}
conditioned on $\mathfrak{R}$.
We refer to \cite[Section~4 and Remark~4.9]{HJ_critical_pts} for the construction and measurability of this process.
Notice that $\xi(a)=\xi(a^\intercal)$ implies that $\nabla \xi(a)\in\S^D$ at $a\in\S^D$ and ~\eqref{e.assump_xi} (together with~\eqref{e.psd_fact}) implies that $\nabla\xi(a)\in \S^D_+$ at $a\in\S^D_+$. Hence, $\nabla\xi\circ\pi\in\Pi$ for every $\pi\in\Pi$.
We also define $\theta:\R^{D\times D}\to \R$ by
\begin{align}\label{e.theta}
    \theta(a) = a \cdot \nabla \xi(a) - \xi (a).
\end{align}
Then, we define $\mathscr{P}: \Pi\times\S^D \to\R$ by
\begin{align}
    \mathscr{P}(\pi,x) =\E \log \iint \exp\Ll( w^{\nabla\xi\circ\pi}(\alpha)\cdot \sigma - \frac{1}{2}\nabla\xi\circ\pi(1)\cdot \sigma\sigma^\intercal +x\cdot \sigma\sigma^\intercal\Rr) \d P_1(\sigma) \d \mathfrak{R}(\alpha) \notag
    \\
    + \frac{1}{2}\int_0^1 \theta(\pi(s))\d s. \label{e.parisi_functional}
\end{align}
Notice the correction $\frac{1}{2}\nabla\xi\circ\pi(1)\cdot \sigma\sigma^\intercal$, which is exactly half of the variance of $w^{\nabla\xi\circ\pi}(\alpha)\cdot \sigma$. We set $\mathscr{P}(\pi) = \mathscr{P}(\pi,0)$ which has the form of the classical Parisi functional.
\begin{theorem}[Parisi formula]\label{t.main}
If $\xi$ is convex, then $\lim_{N\to\infty} F_N = \inf_{\pi\in\Pi}\mathscr{P}(\pi)$.
\end{theorem}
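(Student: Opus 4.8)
The plan is to prove the two bounds separately, following the now-standard strategy for Ising-type models (as in \cite{pan}) but keeping track of the self-overlap correction, and then invoking Panchenko's synchronization/ultrametricity machinery to handle the vector (matrix-valued) overlaps.

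\textbf{Upper bound via Guerra interpolation.}
First I would set up Guerra's replica-symmetry-breaking interpolation between the corrected Hamiltonian $H_N(\sigma)-\frac12 N\xi(\sigma\sigma^\intercal/N)$ and the RPC cavity field $w^{\nabla\xi\circ\pi}(\alpha)\cdot\sigma$ driven by a fixed $\pi\in\Pi$. Differentiating the interpolating free energy in the time parameter, the error term is a Gaussian-by-parts computation producing a quantity of the form $\frac12\E\langle \xi(R^{1,1})- R^{1,2}\cdot\nabla\xi(\pi(\alpha^1\wedge\alpha^2)) + \theta(\pi(\cdot))\rangle$ (schematically, where $R^{l,l'}=\sigma^l(\sigma^{l'})^\intercal/N$ are the overlaps). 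The crucial point—and the reason for introducing the correction—is that the self-overlap contribution $\frac12\xi(R^{1,1})$ coming from the variance of $H_N$ is exactly cancelled by the $-\frac12 N\xi(\sigma\sigma^\intercal/N)$ term in the Hamiltonian and by the matching correction $-\frac12\nabla\xi\circ\pi(1)\cdot\sigma\sigma^\intercal$ in $\mathscr P$; what remains is a cross term in $R^{1,2}$, and convexity of $\xi$ together with the monotonicity assumption \eqref{e.assump_xi} makes $a\mapsto \theta(a)+ (\text{something})$ behave so that the derivative has a sign. Concretely, convexity of $\xi$ gives $\xi(R^{1,2}) \geq \xi(\pi) + (R^{1,2}-\pi)\cdot\nabla\xi(\pi)$, and rearranging yields that the time-derivative is $\le 0$, so $F_N \le \mathscr P(\pi)$ up to $o(1)$ errors; taking the infimum over $\pi$ gives $\limsup_N F_N \le \inf_\pi \mathscr P(\pi)$. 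I would need to be slightly careful that $\pi$ takes values in $\S^D_+$ so that the relevant Gaussian processes exist, and that the left-continuity/monotonicity of $\pi$ is enough to run the discretized interpolation and pass to the limit.

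\textbf{Lower bound via the cavity method with an extra perturbation.}
For the matching lower bound I would follow Panchenko's approach: add to the corrected Hamiltonian a small perturbation Hamiltonian (of mixed $p$-spin type in $\sigma$, with parameters in a compact set) that does not affect the limit of the free energy but (i) forces the Ghirlanda--Guerra identities on the overlap array $R^{l,l'}$ and (ii)—this is the new ingredient borrowed from \cite{mourrat2021nonconvex,mourrat2023free}—forces the \emph{self-overlap} $R^{l,l}=\sigma^l(\sigma^l)^\intercal/N$ to concentrate around a deterministic limit. Granting GG identities plus self-overlap concentration, Panchenko's synchronization theorem applies: the limiting overlap array is ultrametric and synchronized, so the matrix overlap $R^{1,2}$ is a monotone Lipschitz function of the scalar overlap $\alpha^1\wedge\alpha^2\in[0,1]$, i.e.\ it is encoded by some $\pi\in\Pi$. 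Then the Aizenman--Sims--Starr cavity computation, applied to the corrected model, expresses $\liminf_N F_N$ as a cavity functional evaluated at the limiting Gibbs measure; using synchronization to rewrite everything in terms of $\pi$, and using that the self-overlap correction in $F_N$ matches the correction $-\frac12\nabla\xi\circ\pi(1)\cdot\sigma\sigma^\intercal$ in $\mathscr P$ (again the point of the construction), one recognizes the cavity functional as $\mathscr P(\pi)$ for that particular $\pi$. Hence $\liminf_N F_N \ge \inf_\pi \mathscr P(\pi)$.

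\textbf{Main obstacle.}
The routine parts are the Gaussian integration-by-parts in the interpolation and the ASS cavity expansion. The genuinely delicate step is the lower bound's use of synchronization: I must verify that the extra perturbation term can simultaneously deliver the Ghirlanda--Guerra identities \emph{and} the concentration of the matrix-valued self-overlap $R^{l,l}$, and that this concentration is exactly what is needed for Panchenko's synchronization mechanism to produce a bona fide element $\pi\in\Pi$ (left-continuous, $\S^D_+$-valued, monotone) rather than just a measurable function of the scalar overlap. A secondary subtlety is matching the correction terms: one has to check that $\frac12\int_0^1\theta(\pi(s))\,\d s$ with $\theta(a)=a\cdot\nabla\xi(a)-\xi(a)$ is precisely the remainder left over in both the interpolation bound and the cavity computation after the self-overlap terms cancel—this is a bookkeeping identity that uses $\xi(a)=\xi(a^\intercal)$ and the chain rule, and it is where the specific form \eqref{e.theta} of $\theta$ comes from. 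Finally, the convexity hypothesis on $\xi$ is used only in the upper bound (for the one-sided inequality from the interpolation); the monotonicity assumption \eqref{e.assump_xi} is what guarantees $\nabla\xi\circ\pi\in\Pi$ so that the cavity field $w^{\nabla\xi\circ\pi}$ in \eqref{e.parisi_functional} is well defined, and I would make sure both are invoked in the right places.
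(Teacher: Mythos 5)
Your proposal follows essentially the same strategy as the paper: Guerra's interpolation for the upper bound, where the self-overlap correction cancels the $R^{1,1}$ and $Q^{1,1}$ terms so that convexity of $\xi$ gives the sign of the derivative, and an Aizenman--Sims--Starr cavity argument for the lower bound with an extra perturbation that forces the self-overlap to concentrate, so that the Ghirlanda--Guerra identities and Panchenko's synchronization produce a $\pi\in\Pi$ encoding the limiting overlap array. (One small slip: after cancellation the remaining term in the interpolation derivative involves $\xi(R^{1,2})$, not $\xi(R^{1,1})$, but your surrounding discussion and the convexity inequality you write make clear this is just a typo.)
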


\noindent
We can remove the correction in $F_N$ and obtain the limit of \eqref{e.og_free_energy} following the procedure in \cite[Section~5]{mourrat2020extending}.
Let $\xi^*$ be the convex conjugate of $\xi$ on $\S^D_+$ defined by
\begin{align}\label{e.xi^*}
    \xi^*(y) = \sup_{x\in \S^D_+}\{x\cdot y - \xi(x)\}.
\end{align}
\begin{theorem}[Removing correction]\label{t.remove}
If $\xi$ is convex, then
\begin{align}
    \lim_{N\to\infty} \frac{1}{N}\E\log\int \exp \Ll(H_N(\sigma)\Rr) \d P_N(\sigma) & = \sup_{y\in\S^D_+}\inf_{\pi\in\Pi} \Ll\{\mathscr{P}(\pi,y) - \frac{1}{2}\xi^*(2y)\Rr\}\notag
    \\
    & = \sup_{z\in\S^D_+}\inf_{y\in\S^D_+,\,\pi\in\Pi}\Ll\{\mathscr{P}(\pi,y) -y\cdot z+\frac{1}{2}\xi(z)\Rr\}.\label{e.lim_original}
\end{align}
\end{theorem}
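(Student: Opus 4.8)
The plan is to realize the original free energy \eqref{e.og_free_energy} as the time-one value of a solution to a Hamilton--Jacobi equation whose initial condition is essentially $F_N$ enriched by an external field $x\cdot\sigma\sigma^\intercal$, and then to invoke the Hopf--Lax formula together with Theorem~\ref{t.main} to pass to the limit. Concretely, for $t\ge 0$ and $x\in\S^D$ introduce the enriched free energy
\begin{align*}
    F_N(t,x) = \frac{1}{N}\E\log\int\exp\Ll(H_N(\sigma) - (1-t)\cdot\tfrac{1}{2}N\xi\Ll(\tfrac{\sigma\sigma^\intercal}{N}\Rr) + x\cdot\sigma\sigma^\intercal\Rr)\d P_N(\sigma),
\end{align*}
so that $F_N(0,\cdot)$ is the corrected free energy with external field (whose limit is $\mathscr P(\cdot,x)$ by a routine extension of Theorem~\ref{t.main}, the field $x\cdot\sigma\sigma^\intercal$ being carried unchanged through Guerra interpolation and Panchenko's synchronization), while $F_N(1,0)$ is $\tfrac1N\E\log\int\exp(H_N(\sigma))\d P_N$. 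A Gaussian integration-by-parts computation shows that $F_N$ satisfies, up to an error that vanishes as $N\to\infty$ once the self-overlap concentrates (which is exactly the mechanism supplied by the perturbation argument behind Theorem~\ref{t.main}), the approximate Hamilton--Jacobi equation $\dr_t F_N = \tfrac12\int \xi\big(\text{self-overlap}\big)$, which by convexity of $\xi$ and the concentration of the self-overlap matches the nonlinearity $\mathsf H(\nabla_x F) = \tfrac12\xi(2\nabla_x F_N)$ evaluated via the relation $\nabla_x F_N = \E\langle\sigma\sigma^\intercal/N\rangle$.

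Granting that $f(t,x):=\lim_N F_N(t,x)$ exists and solves $\dr_t f = \tfrac12\xi\big(2\nabla_x f\big)$ on $[0,1]\times\S^D_+$ with initial data $f(0,x)=\inf_{\pi\in\Pi}\mathscr P(\pi,x)$ — convexity of $\xi$ giving the required structure and monotonicity \eqref{e.assump_xi} keeping everything inside $\S^D_+$ — the Hopf--Lax (Hopf) formula yields
\begin{align*}
    f(1,0) = \sup_{y\in\S^D_+}\inf_{\pi\in\Pi}\Ll\{\mathscr P(\pi,y) - \tfrac12\xi^*(2y)\Rr\},
\end{align*}
which is the first equality in \eqref{e.lim_original}. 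The second equality is then pure convex duality: writing $\tfrac12\xi^*(2y) = \sup_{z\in\S^D_+}\{y\cdot z - \tfrac12\xi(z)\}$ from the definition \eqref{e.xi^*} and exchanging the resulting infimum over $z$ with the infimum over $\pi$ (and with the $y$ that is now free) converts $-\tfrac12\xi^*(2y)$ into $\inf_{z\in\S^D_+}\{-y\cdot z + \tfrac12\xi(z)\}$, giving the stated three-variable saddle expression; one checks the $\sup$--$\inf$ order is legitimate because $\mathscr P(\pi,y)$ is convex in $y$ (it is a limit of the convex-in-$x$ functions $F_N(0,x)$) and the coupling term $-y\cdot z$ is bilinear, so a minimax argument applies.

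The main obstacle is establishing that the finite-$N$ identity for $\dr_t F_N$ closes up to $o(1)$: this requires that under the tilted measure the self-overlap $\tfrac1N\sigma\sigma^\intercal$ concentrates, so that the Gaussian-integration-by-parts term $\tfrac12\E\langle\xi(\tfrac1N\sigma\sigma^\intercal)\rangle$ can be replaced by $\tfrac12\xi(\E\langle\tfrac1N\sigma\sigma^\intercal\rangle) = \mathsf H(\nabla_x F_N)$ with $\mathsf H(p)=\tfrac12\xi(2p)$ — and, crucially, that this concentration survives \emph{uniformly} in $t\in[0,1]$ and $x$ in compacts. This is precisely the place where one imports the extra perturbation term already used for Theorem~\ref{t.main} (following \cite{mourrat2021nonconvex,mourrat2023free}); the remaining work is the standard but somewhat delicate verification that the limit $f$ is the unique viscosity (or, here, since everything is convex, Hopf--Lax) solution with the right initial data, for which one follows \cite[Section~5]{mourrat2020extending} essentially verbatim, checking only that the matrix-valued state space $\S^D_+$ and the monotonicity hypothesis \eqref{e.assump_xi} cause no new difficulty.
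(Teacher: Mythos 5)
Your overall strategy is the same as the paper's: enrich the corrected free energy with a field parameterized by $(t,x)$, show it asymptotically solves a finite-dimensional Hamilton--Jacobi equation on the cone $\S^D_+$, identify the initial condition with $\inf_\pi\mathscr P(\pi,\cdot)$ via Theorem~\ref{t.main}, and read off the original free energy from the Hopf--Lax and Hopf formulas at the appropriate time. However, there is a concrete factor-of-two error in your nonlinearity. With your parametrization, $\partial_t F_N = \tfrac12\E\langle\xi(R)\rangle$ and $\nabla_x F_N = \E\langle R\rangle$, so upon concentration the nonlinearity is $\mathsf H(p)=\tfrac12\xi(p)$, not $\tfrac12\xi(2p)$ as you wrote. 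This matters: with $\mathsf H(p)=\tfrac12\xi(2p)$ one computes $\mathsf H^*(y)=\tfrac12\xi^*(y)$, so the Hopf--Lax formula at $(t,x)=(1,0)$ would yield $\sup_y\{\psi(y)-\tfrac12\xi^*(y)\}$, which is not the claimed formula. The correct $\mathsf H(p)=\tfrac12\xi(p)$ gives $\mathsf H^*(y)=\tfrac12\xi^*(2y)$ and hence the stated result. (The paper avoids this by using the unrescaled nonlinearity $\xi$ and evaluating at $t=1/2$; your rescaling is fine but you must carry the factor consistently.)

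A second, more substantive issue: your mechanism for passing from the approximate HJ identity to a genuine viscosity solution is not the paper's, and as sketched it is more fragile. You propose to reimport the perturbation that was designed to enforce concentration and the Ghirlanda--Guerra identities for the cavity computation, and you worry about concentration ``uniformly in $t$ and $x$.'' The paper's Section~\ref{s.remove} does not use any perturbation and does not require uniform concentration. Instead it proves the two-sided approximate equation
$0\le\partial_t\cF_N-\xi(\nabla\cF_N)\le C(N^{-1}\Delta\cF_N)^{1/2}+C\E|\nabla\hat\cF_N-\nabla\cF_N|$
(the lower bound is Jensen plus convexity of $\xi$ on $\S^D_+$, with no concentration needed), and then, inside the viscosity-solution argument, controls the error terms only locally at touching points of a smooth test function: convexity of $\cF_N$ gives a local bound on $\Delta\cF_N$ near a local maximum of $\cF_N-\phi$, and ordinary Gaussian concentration of $\hat\cF_N$ around $\cF_N$ controls $\E|\nabla\hat\cF_N-\nabla\cF_N|$ there. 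This localization is the key trick and is missing from your sketch. Finally, the well-posedness and the Hopf--Lax/Hopf representations for the HJ equation on the convex cone $\S^D_+$ with an increasing nonlinearity are not automatic extensions of the $D=1$ argument in \cite{mourrat2020extending}; the paper relies on dedicated results from \cite{chen2022hamilton2} for these, and your claim that the matrix state space ``causes no new difficulty'' glosses over this.
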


The same argument for Theorem~\ref{t.main} can be used to treat models enriched by an external field given by a RPC. We describe the enriched model and sketch the proof of the corresponding result, Theorem~\ref{t.enriched}, in Section~\ref{s.enriched}. 

The convexity of $\xi$ is used in the proof of the upper bound via Guerra's interpolation.
To weaken this assumption to convexity over $\S^D_+$, one needs Talagrand's positivity principle, which is not available in general vector spin models.
Alternatively, an upper bound can be obtained through the Hamilton--Jacobi equation approach \cite{mourrat2021nonconvex,mourrat2023free,chen2022hamilton,chen2022hamilton2}. Based on this, statements in Theorems~\ref{t.main} and~\ref{t.remove} are proved in \cite[Corollary~8.3 and Proposition~8.4]{HJ_critical_pts} under the weaker assumption that $\xi$ is convex over $\S^D_+$.

\begin{remark}
It will be evident from the proof of the lower bound in Section~\ref{s.lower_bd} that there exists a minimizer $\pi$ of $\inf_{\pi\in\Pi}\mathscr{P}(\pi)$ that satisfies $\pi(1)\in \mathcal{K}= \overline{\mathrm{conv}}\Ll\{\tau\tau^\intercal:\:\tau\in\supp P_1\Rr\}$. Hence, we can replace $\inf_{\pi\in\Pi}$ in both Theorems~\ref{t.main} and~\ref{t.remove} by $\inf_{\pi\in\Pi:\: \pi(1)\in\mathcal{K}}$.
\end{remark}

\subsection*{Related works}
The classical Parisi formula for the limit of the free energy in the Sherrington--Kirkpatrick (SK) model ($D=1$, $P_1$ uniform on $\{\pm1\}$, and $\xi$ quadratic) was proposed by Parisi in \cite{parisi79,parisi80} and later proven by Talagrand in \cite{Tpaper} building on the upper bound by Guerra in \cite{gue03}. Panchenko later extended the formula to SK models with soft spins \cite{pan05}, scalar mixed $p$-spin models \cite{panchenko2014parisi,pan} with Ising spins, multi-species models \cite{pan.multi}, and mixed $p$-spin models with vector spins \cite{pan.potts,pan.vec}. 
Recently, \cite{bates2023parisi} simplified the Parisi formula for balanced Potts spin glass.
After Mourrat's interpretation of the Parisi formula as the Hopf--Lax formula for a Hamilton--Jacobi equation \cite{mourrat2022parisi}, the formula was extended for enriched models \cite{mourrat2020extending}. 
For spherical spins, the Parisi formula was proven for the SK model \cite{tal.sph}, the mixed $p$-spin model \cite{chen2013aizenman}, and the multi-species model \cite{bates2022free}.
Since we have assumed that $P_N$ is a product measure, the most relevant works are \cite{pan05,pan.potts,pan.vec,mourrat2020extending}.

Let us explain the effect of the self-overlap correction.
In Guerra's interpolation for the upper bound, there are terms involving the self-overlap, which have the wrong sign.
If the self-overlap is constant (as in the Ising case), these terms cancel each other.
Otherwise, to tackle this issue, \cite{pan.potts,pan.vec} considered free energy with the self-overlap constrained in a small ball, and controlled the original free energy by these with varying constraints.
In Section~\ref{s.upper_bd}, we demonstrate that by correcting $F_N$, the self-overlap terms in the interpolation computation are eliminated (as shown in~\eqref{e.dphi(r)}). Consequently, we can establish the upper bound in the same way as for the Ising case.

In the cavity computation in \cite{pan.potts,pan.vec} for the lower bound, the constraint of the self-overlap disrupts the product structure of $P_N$ but enables the derivation of Ghirlanda--Guerra identities in the limit.
Here, avoiding the constraint, we preserve the product measure structure in the cavity computation and proceed in the same way as for the Ising case (Section~\ref{s.lower_bd}).
The only issue is that the usual perturbation term added to the Hamiltonian does not ensure the Ghirlanda--Guerra identities since it only controls non-self-overlaps. To resolve this, we include additional perturbation (the second sum in~\eqref{e.H^pert}) that forces the self-overlap to concentrate. This technique has been previously utilized in \cite{mourrat2021nonconvex,mourrat2023free}.

Since the overlap is matrix-valued, in the proof of the lower bound, we also need the synchronization technique developed by Panchenko based on the ultrametricity of the overlap proved in \cite{pan.aom}.

To recover the limit of the original free energy given by~\eqref{e.og_free_energy}, we add an external field parameterized by $(t,x)\in [0,\infty)\times \S^D$ into $F_N$, which is denoted by $\mathcal{F}_N(t,x)$ (see equation~\eqref{e.cF_N}). The external field takes the form of $tN\xi(\frac{\sigma\sigma^\intercal}{N})+x\cdot \sigma\sigma^\intercal$. In Section~\ref{s.remove}, we will show that $\mathcal{F}_N(t,x)$ asymptotically satisfies a simple Hamilton--Jacobi equation $\partial_t f - \xi(\nabla f)=0$. It is worth noting that $\mathcal{F}_N(\frac{1}{2},0)$ corresponds to the original free energy, and the limit of $\mathcal{F}_N(0,x)$ is given by Theorem~\ref{t.main}, with $P_N$ tilted by $e^{x\cdot \sigma\sigma^\intercal}$. Therefore, we can interpolate along the equation to deduce Theorem~\ref{t.remove} from Theorem~\ref{t.main}.

It should be noted that the Hamilton--Jacobi equation mentioned here is a finite-dimensional one, rather than the infinite-dimensional one in \cite{mourrat2022parisi,mourrat2021nonconvex,mourrat2023free,chen2022hamilton} associated with the enriched model discussed in Section~\ref{s.enriched}. The former is similar to the one related to the Curie-Weiss model described in \cite{mourrat2021hamilton}. 
This idea for removing the correction first appeared in \cite[Section~5]{mourrat2020extending}.

In Section~\ref{s.enriched}, we consider the enriched model that was previously investigated in \cite{mourrat2020extending}. In that work, the limit of the free energy was determined by first establishing the Parisi formula similar to the ones in \cite{pan.potts,pan.vec} without correction. The formula was then transformed into the predicted form in \cite{mourrat2022parisi}. Here, we will sketch a direct proof using the argument outlined above.

Our simpler approach is not limited to product measures for $P_N$, making it useful in more general cases. However, our assumption on $P_N$ simplifies the computation and enables a clearer presentation of the argument.

\subsection*{Comments on variational formulae}

The formula presented in Theorem~\ref{t.main} has the classical form of the Parisi formula for Ising spins, as seen in \cite[Section~3.1]{pan}. However, the functional $\mathscr{P}(\pi)$ differs from the classical Parisi formula by a constant, which is a consequence of $\sigma\sigma^\intercal = N$ in the Ising case.

In Theorem~\ref{t.remove}, the two representations are obtained by solving the Hamilton--Jacobi equation mentioned earlier. The first representation is derived using the Hopf--Lax formula, which is possible due to the convexity of $\xi$. The second representation comes from the Hopf formula, taking advantage of the convexity of $y\mapsto \inf_{\pi\in\Pi}\mathscr{P}(\pi,y)$. One can verify the equivalence between the two using the convexity of the two functions and the Fenchel--Moreau theorem.

The first representation in Theorem~\ref{t.remove} is a generalization of \cite[Corollary~1.3]{mourrat2020extending} to $D\geq1$. The second representation is very close to the formula obtained by Panchenko in \cite{pan.vec}. The functional $\mathcal{P}$ in \cite[(31)]{pan.vec} can be rewritten as
\begin{align*}
    \mathcal{P}(y,z,\pi) = \mathscr{P}\Ll(\pi,y+ \frac{1}{2}\nabla\xi(z)\Rr)- \Ll(y+\frac{1}{2}\nabla\xi(z)\Rr)\cdot z+\frac{1}{2}\xi(z)
\end{align*}
subject to restriction $\pi(1)=z$ (here $y,z$ correspond to $\lambda,D$ in~\cite{pan.vec}). We set $\Pi(z)=\{\pi\in\Pi:\pi(1)=z\}$ and let $\mathcal{D}$ be the convex hull of $\{\sigma\sigma^\intercal:\sigma\in\supp P_1\}$. Notice that $\mathcal{D}\subset\S^D_+$. By \cite[Theorem~1]{pan.vec}, the left-hand side of~\eqref{e.lim_original} is equal to
\begin{align*}
    \sup_{z\in\mathcal{D}}\inf_{y\in\S^D,\,\pi\in\Pi(z)}\mathcal{P}(y,z,\pi)
    = \sup_{z\in\mathcal{D}}\inf_{y\in\S^D,\,\pi\in\Pi(z)}\Ll\{\mathscr{P}(\pi,y) -y\cdot z+\frac{1}{2}\xi(z)\Rr\},
\end{align*}
which is similar to our second representation in~\eqref{e.lim_original}.
The equivalence of the two formulae will be directly verified in~\cite{chen2023on} where more information on the self-overlap is needed.

\subsection*{Organization of the paper}
We establish Theorem~\ref{t.main} by dividing the proof into three parts: the upper bound (Section~\ref{s.upper_bd}), the perturbation term (Section~\ref{s.pertrubation}), and the lower bound (Section~\ref{s.lower_bd}). Theorem~\ref{t.remove}, which removes the correction term, is proved in Section~\ref{s.remove}. 
We also provide a brief overview of the enriched model and sketch the proof of the corresponding result in Section~\ref{s.enriched}.

\subsection*{Acknowledgements}
The author would like to thank Jean-Christophe Mourrat for helpful discussions. This project has received funding from the European Research Council (ERC) under the European Union’s Horizon 2020 research and innovation programme (grant agreement No.\ 757296).

\section{Upper bound via Guerra's interpolation}\label{s.upper_bd}

We prove the upper bound in Theorem~\ref{t.main} using Guerra's interpolation in \cite{gue03}, where the convexity of $\xi$ is needed.
We often need the following basic fact: if $a\in\S^D$, then
\begin{align}\label{e.psd_fact}
    a\cdot b\geq 0,\,\forall b\in \S^D_+\quad\Longleftrightarrow\quad a\in\S^D_+.
\end{align}
For $l,l'\in\N$, we write $R^{l,l'} = \frac{1}{N}\sigma^l(\sigma^{l'})^\intercal$ and $Q^{l,l'}=\alpha^l\wedge \alpha^{l'}$. By our setting and assumptions, $R^{l,l'}$ is $\R^{D\times D}$-valued satisfying $|R^{l,l'}|\leq 1$, and $Q^{l,l'}\in[0,1]$ is real-valued. 

\begin{proposition}\label{p.upp_bd}
If $\xi$ is convex, then $\limsup_{N\to\infty} F_N\leq \inf_{\pi\in\Pi}\mathscr{P}(\pi)$.
\end{proposition}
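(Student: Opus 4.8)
The plan is to prove the upper bound by Guerra's interpolation scheme adapted to the matrix-valued overlap, exploiting the self-overlap correction to cancel the problematic terms. Fix $\pi\in\Pi$. Since $\mathscr P(\pi)=\mathscr P(\pi,0)$ only depends on $\pi$ through $\nabla\xi\circ\pi$, write $\zeta=\nabla\xi\circ\pi\in\Pi$; after approximating $\pi$ by a step function we may assume $\zeta$ takes finitely many values $0=\zeta_0\leqpsd\zeta_1\leqpsd\cdots\leqpsd\zeta_n$ on consecutive intervals determined by $0=q_0<q_1<\cdots<q_n=1$, which corresponds to choosing a finite Ruelle cascade with those overlap jumps. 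I would set up the interpolating Hamiltonian $H_{N,r}(\sigma,\alpha)$, $r\in[0,1]$, combining a scaled copy of $H_N(\sigma)$ with the cavity field built from the Gaussian process $w^{\zeta}$ indexed by the RPC, each appearing with weight interpolated by $r$; the correction $-\tfrac12 N\xi(\sigma\sigma^\intercal/N)$ inside $F_N$ and the correction $-\tfrac12\nabla\xi\circ\pi(1)\cdot\sigma\sigma^\intercal$ inside $\mathscr P$ are built in from the start. Define $\varphi(r)=\frac1N\E\log\iint \exp(H_{N,r})\,\d P_N\,\d\mathfrak R$, so that $\varphi(1)$ recovers $F_N$ up to the cascade normalization and $\varphi(0)$ splits into the single-site term of $\mathscr P(\pi)$; then $F_N\le \mathscr P(\pi)+ (\varphi(1)-\varphi(0)) - \tfrac12\int_0^1\theta(\pi(s))\,\d s + o(1)$, so it remains to show $\varphi'(r)\le -\tfrac12\int_0^1\theta(\pi(s))\,\d s$ up to $o(1)$.

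The computation of $\varphi'(r)$ is the heart of the argument. Differentiating and applying Gaussian integration by parts produces, inside the Gibbs bracket of two replicas, the combination
\begin{align*}
\xi(R^{1,1}) - \text{(cavity self-term)} - \xi(R^{1,2}) + \zeta(Q^{1,2})\cdot R^{1,2} + \text{(self-overlap pieces)}.
\end{align*}
Here the design of the correction terms is exactly what makes the diagonal ($l=l'$) contributions cancel: the $-\tfrac12 N\xi(\sigma\sigma^\intercal/N)$ correction removes the $\xi(R^{1,1})$ term coming from differentiating $H_N$, and the $-\tfrac12\nabla\xi\circ\pi(1)\cdot\sigma\sigma^\intercal$ correction removes the self-term coming from the cavity field, so that $R^{1,1}$ and $Q^{1,1}$ no longer appear (this is what is summarized in the forthcoming equation~\eqref{e.dphi(r)}). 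What remains for the off-diagonal replicas is $-\tfrac12\E\langle \xi(R^{1,2}) - \nabla\xi(\pi_?) \cdot(\text{stuff}) \rangle$; here one uses convexity of $\xi$ in the form $\xi(R^{1,2})\ge \xi(\pi(s))+\nabla\xi(\pi(s))\cdot(R^{1,2}-\pi(s))$ — valid for all of $\R^{D\times D}$ precisely because $\xi$ is globally convex, which is why the hypothesis is needed — together with the cascade identity $\E\langle f(Q^{1,2})\rangle_r = \int_0^1 f(s)\,\d s$ for the overlap of the RPC, to bound $\varphi'(r)$ by $-\tfrac12\int_0^1\big(\nabla\xi(\pi(s))\cdot\pi(s)-\xi(\pi(s))\big)\d s=-\tfrac12\int_0^1\theta(\pi(s))\,\d s$, the error terms being $O(1/N)$ from the discreteness of the cascade and the approximation of $\pi$.

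Assembling, $\varphi(1)-\varphi(0)\le -\tfrac12\int_0^1\theta(\pi(s))\,\d s+o(1)$ gives $F_N\le\mathscr P(\pi)+o(1)$, and taking $N\to\infty$ then the infimum over the (dense set of) step functions $\pi$, and finally over all $\pi\in\Pi$ by continuity of $\mathscr P$, yields $\limsup_N F_N\le\inf_{\pi\in\Pi}\mathscr P(\pi)$. The main obstacle is organizing the Gaussian integration-by-parts bookkeeping so that the interpolating field's covariance produces exactly $\xi(R^{1,1})-\xi(R^{1,2})$ plus the cavity $\zeta$-terms with matching signs, and verifying that the two correction terms annihilate all diagonal contributions; a secondary technical point is justifying the reduction to finitely-valued $\pi$ and controlling the passage between the continuous RPC normalization used in the definition of $\mathscr P$ and the finite cascade used in the interpolation, together with the measurability of $w^\zeta$ which is quoted from \cite{HJ_critical_pts}. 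The sign condition that the off-diagonal remainder is nonpositive reduces, as usual, to convexity of $\xi$ and the monotonicity assumption~\eqref{e.assump_xi} ensuring $\zeta=\nabla\xi\circ\pi\in\Pi$ so that the cavity field $w^\zeta$ is well-defined.
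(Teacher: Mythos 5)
The central gap is that your interpolating Hamiltonian is missing a third Gaussian process that the paper introduces, and this omission forces you to rely on an unjustified cascade identity and produces a sign error.

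The paper's interpolation~\eqref{e.H^t_upper_bd} includes, besides the scaled $H_N(\sigma)$ and the cavity field $\sum_i w_i^{\nabla\xi\circ\pi}(\alpha)\cdot\sigma_i$, an additional real-valued centered Gaussian process $(y(\alpha))$ with covariance $\E y(\alpha)y(\alpha')=\theta(\pi(\alpha\wedge\alpha'))$, together with the correction $-\frac{rN}{2}\theta(\pi(1))$. Differentiating, this extra process contributes exactly the $\theta(\pi(Q^{1,2}))$ inside the bracket of~\eqref{e.dphi(r)}, so that the integrand is $\xi(R^{1,2})-\nabla\xi(\pi(Q^{1,2}))\cdot R^{1,2}+\theta(\pi(Q^{1,2}))\geq 0$ by convexity, whence $\varphi'(r)\leq 0$ \emph{exactly}, with no need to know anything about the law of $Q^{1,2}$ under $\langle\cdot\rangle_r$. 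Then $\varphi(1)-F_N$ is a genuine RPC free-energy average, evaluated by the recursive computation~\eqref{e.logexpy}, and the $\frac12\int_0^1\theta(\pi(s))\,\d s$ emerges from there, not from the derivative.

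Without $y(\alpha)$, as in your setup, the derivative is
$\varphi'(r)=-\frac12\E\langle\,\xi(R^{1,2})-\nabla\xi(\pi(Q^{1,2}))\cdot R^{1,2}\,\rangle_r$,
and convexity in the form $\xi(a)-\nabla\xi(b)\cdot a\geq -\theta(b)$ gives
$\varphi'(r)\leq +\tfrac12\E\langle\theta(\pi(Q^{1,2}))\rangle_r$,
with a \emph{plus} sign. You write $\varphi'(r)\le -\tfrac12\int_0^1\theta(\pi(s))\,\d s$, which has the wrong sign; tracked through your bookkeeping equation that would deliver $F_N\leq\mathscr P(\pi)-\int_0^1\theta(\pi(s))\,\d s$, which is strictly smaller than $\mathscr P(\pi)$ and would contradict the matching lower bound. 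The correct upper bound $F_N\leq\mathscr P(\pi)$ only emerges after you repair the sign. Beyond the sign, converting $\E\langle\theta(\pi(Q^{1,2}))\rangle_r$ to $\int_0^1\theta(\pi(s))\,\d s$ for \emph{every} $r\in(0,1)$ requires knowing that the $\alpha$-overlap distribution under the interpolated Gibbs measure is still uniform. This is not a cascade identity for the plain RPC (which holds only at $r=1$, where the weight does not depend on $\alpha$); at intermediate $r$ the $\alpha$-marginal of $\langle\cdot\rangle_r$ is a tilted RPC, and preservation of the overlap law under such tilting is a Bolthausen--Sznitman-type invariance that you would have to invoke and verify for the specific weight $\exp\bigl(\log\int e^{H^r_N(\sigma,\alpha)}\,\d P_N(\sigma)\bigr)$. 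The paper's introduction of $y(\alpha)$ makes the whole issue moot: $\varphi'(r)\leq 0$ holds pointwise and only the standard Panchenko Lemma~3.1 computation for a Gaussian field over the RPC is needed.
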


\begin{proof}
Recall the definition of $\theta$ in \eqref{e.theta}.
Notice that~\eqref{e.assump_xi} and~\eqref{e.psd_fact} imply that $b\cdot \nabla \xi(b) \leq b\cdot\nabla \xi(a)$ for all $a, b\in \S^D_+$ satisfying $a\geqpsd b$. Using this and the convexity of $\xi$, we have that
\begin{align*}
    \theta(a) -\theta(b) \geq \xi(b)-\xi(a)-(b-a)\cdot\nabla \xi(a) \geq 0
\end{align*}
for all $a,b\in\S^D_+$ satisfying $a\geqpsd  b$. Also, due to $\xi(0)=0$, we have $\theta(0)=0$. Hence, fixing any $\pi\in\Pi$, we have that $\theta\circ\pi:[0,1]\to[0,\infty)$ is increasing. Let $(y(\alpha))_{\alpha\in\supp\mathfrak{R}}$ be a centered real-valued Gaussian process with covariance $\E y(\alpha)y(\alpha') = \theta(\pi(\alpha\wedge\alpha'))$.
For $i\in\N$, let $(w_i^{\nabla\xi\circ\pi}(\alpha))_{\alpha\in\supp\mathfrak{R}}$ be i.i.d.\ copies of $(w^{\nabla\xi\circ\pi}(\alpha))_{\alpha\in\supp\mathfrak{R}}$ conditioned on $\mathfrak{R}$. For $\sigma\in \R^{D\times N}$, we write $\sigma= (\sigma_i)_{i=1}^N \in (\R^D)^N$ as a tuple of its column vectors. For $r\in[0,1]$, we define the interpolating Hamiltonian
\begin{align}
    H^r_N(\sigma,\alpha) = \sqrt{r} H_N(\sigma) - \frac{r}{2}N\xi\Ll(\frac{\sigma\sigma^\intercal}{N}\Rr) + \sqrt{1-r}\sum_{i=1}^N w_i^{\nabla\xi\circ\pi}(\alpha)\cdot \sigma_{i} -\frac{1-r}{2}\nabla\xi\circ\pi(1)\cdot \sigma\sigma^\intercal \notag
    \\
    + \sqrt{rN}y(\alpha) - \frac{rN}{2}\theta(\pi(1)),\label{e.H^t_upper_bd}
\end{align}
and the associated interpolating free energy
\begin{align}\label{e.phi(r)}
    \varphi(r) = \frac{1}{N}\E\log\iint \exp\Ll(H^r_N(\sigma,\alpha)\Rr)\d P_N(\sigma)\d \mathfrak{R}(\alpha).
\end{align}
Denoting the Gibbs measure with Hamiltonian $H^r_N(\sigma,\alpha)$ by $\la \cdot \ra_r$, we can compute using the Gaussian integration by parts (see~\cite[Section~1.2]{pan}) that
\begin{align}\label{e.dphi(r)}
    \frac{\d}{\d r}\varphi(r) = -\frac{1}{2}\E\la\xi\Ll(R^{1,2}\Rr)-\nabla\xi\Ll(\pi\Ll(Q^{1,2}\Rr)\Rr) \cdot R^{1,2} + \theta\Ll(\pi\Ll(Q^{1,2}\Rr)\Rr)\ra_r.
\end{align}
The self-overlap corrections have canceled all terms involving $R^{1,1}$ and $Q^{1,1}$, which is the key to the proof.
The convexity of $\xi$ ensures $\xi(a) - \nabla\xi(b)\cdot a -\theta(b)\geq 0$ for any $a,b\in \R^{D\times D}$. Therefore, we have $\frac{\d}{\d r}\varphi(r)\leq 0$ and thus $\varphi(1)\leq \varphi(0)$.

Let us rewrite the right-hand side of~\eqref{e.parisi_functional} as $\mathscr{P}_0(\pi)+\mathscr{P}_1(\pi)$.
We have $\varphi(0) = \mathscr{P}_0(\pi)$
\begin{align*}
    \varphi(1) = F_N +\frac{1}{N} \E \log \int \exp\Ll(\sqrt{N}y(\alpha) - \frac{N}{2}\theta(\pi(1))\Rr)\d \mathfrak{R}(\alpha).
\end{align*}
We want to show that the second term coincides with $\mathscr{P}_1(\pi)$. Assume that $\pi=\sum_{l=1}^kq_l\mathds{1}_{[s_{l-1},s_l)} + q_k\mathds{1}_{\{1\}}$ for some $k\in\N$, $0=s_0<s_1<\dots<s_k=1$ and $0= q_0\leqpsd q_1\leqpsd\cdots \leqpsd q_k$. Using the standard computation of the RPC in the proof of \cite[Lemma~3.1]{pan}, we have
\begin{align}
    &\frac{1}{N}\E \log \int \exp\Ll(\sqrt{N}y(\alpha)\Rr) \d\mathfrak{R}(\alpha) = \frac{1}{2}\sum_{l=0}^{ k-1}s_l\left(\theta(q_{l+1})-\theta(q_l)\right) \notag
    \\
    &= -\frac{1}{2}\Ll(\sum_{l=1}^k (s_{l}-s_{l-1})\theta(q_l) - s_k\theta(q_k)+s_0\theta(q_0)\Rr)
     =-\frac{1}{2} \int_0^1\theta(\pi(s))\d s + \frac{1}{2} \theta(\pi(1)).\label{e.logexpy}
\end{align}
Hence, we have $\varphi(1) = F_N -\mathscr{P}_1(\pi)$. The case where $\pi$ is continuous can be treated by the standard approximation. In conclusion, $\varphi(1)\leq \varphi(0)$ implies that $F_N\leq \mathscr{P}(\pi)$. Since $\pi$ is arbitrary, we can obtain the desired upper bound.
\end{proof}

\section{Perturbation and the Ghirlanda--Guerra identities}\label{s.pertrubation}

We directly work with the Hamiltonian appearing in the cavity computation in Section~\ref{s.lower_bd}.
For each $N$, let $(\tilde H_N(\sigma))_{\sigma\in \R^{D\times N}}$ be a centered real-valued Gaussian process with covariance
\begin{align}\label{e.tilde_H_N}
    \E\tilde H_N(\sigma)\tilde H_N(\sigma')= (N+1)\xi\Ll(\frac{\sigma\sigma'^\intercal}{N+1}\Rr).
\end{align}
We describe the perturbation term that will ensure the Ghirlanda--Guerra identities on average in the limit. 
Let $(a_n)_{n\in\N}$ be an enumeration of $\{q\in \S^D_+ :|q|\leq 1\}\cap \Q^{D\times D}$. For each $N\in \N$ and $h\in \N^3$, let $(H^h_N(\sigma))_{\sigma\in \R^{D\times N}}$ be an independent Gaussian process with
\begin{align*}\E H^h_N(\sigma)H^h_N(\sigma') = N\Ll(a_{h_1}\cdot \Ll(\frac{\sigma\sigma'^\intercal}{N}\Rr)^{\odot h_2}\Rr)^{h_3}
\end{align*}
where $\odot$ is the Schur product of matrices (i.e.\ $(a\odot b)_{ij} = (a_{ij}b_{ij})$). The construction of $H^h_N(\sigma)$ is described at the beginning of \cite[Section~5]{pan.vec} and omitted here. Note that $H^h_N(\sigma)$ has the same order of $\tilde H_N(\sigma)$. For each $h$, fix a constant $c_h>0$ satisfying
\begin{align*}c_h^2 \sup_{b\in \R^{D\times D},\, |b|\leq 1}\Ll(a_{h_1}\cdot b^{\odot h_2}\Rr)^{h_3}\leq 2^{-2|h|},
\end{align*}
where $|h|=h_1+h_2+h_3$.
Let $\{e_i\}_{i=1}^{D(D+1)/2}$ be an orthonormal basis of $\S^{D}$ and $g_i(\sigma) = e_i\cdot \sigma\sigma^\intercal$ for $i\in \{1,\dots, D(D+1)/2\}$.
For perturbation parameter
\begin{align*}x = \Ll((x_h)_{h\in\N^3}, (x_i)_{i=1}^{D(D+1)/2} \Rr)\in [0,3]^{\N^3}\times [0,3]^\frac{D(D+1)}{2},
\end{align*}
we set
\begin{align}\label{e.H^pert}
    H_{N}^{\pert,x}(\sigma) = \sum_{h\in \N^3} x_h  c_h H^h_N(\sigma)+ \sum_{i=1}^{D(D+1)/2} x_ig_i(\sigma).
\end{align}
The second sum will ensure the concentration of the self-overlap, allowing us to get the Ghirlanda--Guerra identities.

For $l,l'\in\N$, recall $R^{l,l'}$ below~\eqref{e.psd_fact}; write $R^{l,l'}_h = (a_{h_1}\cdot (R^{l,l'})^{\odot h_2})^{h_3}$ for each $h$, and $R^{\leq n} = \Ll(R^{l,l'}\Rr)_{l,l'\leq n} $ for each $n\in\N$.
Let $\E_x$ be the expectation under which $x$ is an i.i.d.\ sequence of uniform random variables over $[1,2]$. Let $\la\cdot\ra_x$ be the Gibbs measure of $\sigma$ with Hamiltonian
\begin{align}\label{e.tildeH^x_N}
    \tilde H_N^x(\sigma)=\tilde H_N(\sigma) -\frac{1}{2}(N+1)\xi\Ll(\frac{\sigma\sigma^\intercal}{N+1}\Rr) + N^{-\frac{1}{16}} H^{\pert,x}_N(\sigma).
\end{align}
In the following, $\E$ only integrates the Gaussian randomness in $\tilde H_N(\sigma)$ and $H^{\pert,x}_N(\sigma)$.
\begin{proposition}\label{p.perturbation}
The following holds:
\begin{enumerate}
    \item \label{i.self-overlap_concent} $\lim_{N\to\infty} \E_x \E \la \Ll|R^{1,1} - \E \la R^{1,1}\ra_x\Rr|\ra_x =0$;
    \item \label{i.GG} $\lim_{N\to\infty} \E_x \Delta^x(f,n,h)=0$ for every integer $n\geq 2$, every $h\in \N^3$, and every bounded measurable function $f$ of $R^{\leq n}$ 
    where
    \begin{align}\label{e.Delta^x}
    &\Delta^x(f,n,h)\notag
    \\&= \Ll|\E \la f(R^{\leq n})R^{1,n+1}_h \ra_x - \frac{1}{n}\E\la f(R^{\leq n})\ra_x \E \la  R^{1,2}_h\ra_x - \frac{1}{n}\sum_{l=2}^n \E \la f(R^{\leq n}) R^{1,l}_h\ra_x\Rr|.
\end{align}
\end{enumerate}

\end{proposition}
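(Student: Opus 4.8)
The plan is to follow the by-now standard strategy for obtaining the Ghirlanda--Guerra identities from a small perturbation, adapted to the matrix-valued self-overlap, treating the two sums in \eqref{e.H^pert} separately: the first sum handles non-self-overlaps exactly as in \cite[Section~3.2]{pan} and \cite[Section~5]{pan.vec}, while the second sum is what forces the self-overlap to concentrate. For part \eqref{i.self-overlap_concent}, I would first note that $R^{1,1} = \frac{1}{N}\sigma\sigma^\intercal$ is a function of a single replica, and that the term $\sum_i x_i g_i(\sigma) = \sum_i x_i\, e_i\cdot\sigma\sigma^\intercal$ is, after scaling by $N^{-1/16}$, a linear (in $\sigma\sigma^\intercal$) perturbation whose coefficients $(x_i)$ are independent uniform on $[1,2]$. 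The key computation is that the map $x_i \mapsto \mathcal{F}_N^{(i)}(x) := \frac{1}{N}\E\log\int \exp(\cdots + N^{-1/16} x_i g_i(\sigma) + \cdots)\d P_N(\sigma)$ is convex in $x_i$ with derivative $N^{-1/16}\E\la \frac{1}{N}g_i(\sigma)\ra_x = N^{-1/16}\E\la e_i\cdot R^{1,1}\ra_x$, and that its second derivative equals $N^{-1/16+1}$ times the Gibbs variance of $\frac{1}{N}g_i(\sigma)$, i.e.\ $N^{15/16}\E\la (e_i\cdot(R^{1,1}-\la R^{1,1}\ra_x))^2\ra_x$ plus a disorder-fluctuation term. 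Since $\mathcal{F}_N^{(i)}$ and its $x_i$-derivative are uniformly bounded (the integrand is controlled because $|\sigma\sigma^\intercal|\leq N$ and the $P_1$-support lies in the unit ball), a standard convexity/concentration lemma (e.g.\ \cite[Theorem~3.8 and its proof]{pan}, or the argument in \cite[Section~3.2]{pan}) gives that $\E_x$ of this variance is $O(N^{-15/16})\cdot o(1)$-bounded; combined with the Gaussian concentration of $\frac{1}{N}\E\la g_i(\sigma)\ra_x$ around $\frac{1}{N}\la g_i(\sigma)\ra_x$ one concludes $\E_x\E\la |e_i\cdot(R^{1,1}-\E\la R^{1,1}\ra_x)|\ra_x \to 0$ for each fixed $i$, and summing over the finitely many basis vectors $e_i$ of $\S^D$ yields \eqref{i.self-overlap_concent}.

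For part \eqref{i.GG}, I would invoke the general mechanism: the first sum $\sum_h x_h c_h H^h_N(\sigma)$ with the prescribed normalization is precisely the Panchenko perturbation, and the standard argument (convexity of the free energy in each $x_h$, the Gaussian concentration of the perturbation Hamiltonian, and the uniform summability guaranteed by the choice of $c_h$ with $c_h^2\sup(\cdots)\le 2^{-2|h|}$) shows that, after averaging over $x$, for every fixed $h$ the Gibbs measure satisfies the Ghirlanda--Guerra identity \emph{for the observable} $R^{1,n+1}_h$ \emph{provided the self-overlap $R^{1,1}$ concentrates}; see \cite[Theorem~3.2]{pan} and the discussion in \cite[Section~5]{pan.vec}, and the treatment in \cite{mourrat2021nonconvex,mourrat2023free}. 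The point where part \eqref{i.self-overlap_concent} enters: the usual derivation of the Ghirlanda--Guerra identities from the perturbation produces, for a function $f$ of $R^{\le n}$, an error controlled by $\E_x\E\la |R^{1,n+1}_h - \E\la R^{1,n+1}_h\ra_x|\ra_x$, and this quantity must be shown to vanish; for $l\ne l'$ this follows from the concentration of non-self-overlaps forced by the first sum, and for the "diagonal" contribution $l=l'=1$ it follows from \eqref{i.self-overlap_concent} together with the local Lipschitz continuity of $b\mapsto (a_{h_1}\cdot b^{\odot h_2})^{h_3}$ on the bounded set $\{|b|\le 1\}$. Thus I would first establish \eqref{i.self-overlap_concent}, then feed it into the standard argument to get \eqref{i.GG}.

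Concretely, the steps in order are: (i) set up the free energy $\mathcal{F}_N(x)$ with Hamiltonian \eqref{e.tildeH^x_N}, record its boundedness and its convexity in each coordinate of $x$; (ii) prove \eqref{i.self-overlap_concent} via the convexity-plus-Gaussian-concentration lemma applied to the finitely many coordinates $x_i$, exploiting the $N^{-1/16}$ scaling so that the variance bound $O(N^{-15/16})$ beats the $N^{1}$ from differentiating $\frac{1}{N}g_i$; (iii) prove \eqref{i.GG} by the Panchenko perturbation argument for the coordinates $x_h$, using the choice of $c_h$ for uniform control and using \eqref{i.self-overlap_concent} (and the Lipschitz bound on $b\mapsto R_h$) to kill the self-overlap contribution to the error term $\Delta^x$. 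The main obstacle is step (ii): one must be careful that the exponent $-\tfrac{1}{16}$ is chosen so that the self-overlap, after the $N^{-1/16}$-scaled linear tilt, concentrates — the derivative of $\mathcal{F}_N$ in $x_i$ is $N^{-1/16}\E\la e_i\cdot R^{1,1}\ra_x$, so the second derivative brings down a factor $N^{15/16}$ times the Gibbs variance of $e_i\cdot R^{1,1}$, and the convexity lemma bounds $\int_1^2$ of this second derivative by the oscillation of $\mathcal{F}_N$, which is $O(N^{-1/16})$; hence the $x$-averaged Gibbs variance of $e_i\cdot R^{1,1}$ is $O(N^{-15/16}\cdot N^{-1/16}) = O(N^{-1})$ — more than enough — but matching the exponents and handling the disorder-fluctuation piece of the variance (which requires the Gaussian Poincar\'e/concentration inequality for $H^{\pert,x}_N$ and $\tilde H_N$) is where the real care lies. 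Everything else is a direct transcription of the scalar arguments in \cite{pan,pan.vec} with overlaps replaced by their $D\times D$ matrix analogues.
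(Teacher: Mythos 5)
Your proposal is correct and follows essentially the same strategy as the paper: use convexity of the free energy in each perturbation coordinate together with Gaussian concentration and Panchenko's convexity lemma to get the self-overlap concentration from the linear $g_i$-perturbations, then run the standard Panchenko argument for the Gaussian $H^h_N$-perturbations, using part~\eqref{i.self-overlap_concent} together with the Lipschitz bound on $b\mapsto (a_{h_1}\cdot b^{\odot h_2})^{h_3}$ to kill the self-overlap term $R^{1,1}_h$ produced by Gaussian integration by parts. Your informal exponent bookkeeping is slightly off (the factor brought down is $N^{7/8}$, not $N^{15/16}$, and the final rate the paper actually obtains after balancing the Gibbs and disorder fluctuations is $N^{-7/32}$, not $N^{-1}$), but this is immaterial since only decay to zero is needed, and you correctly flag that controlling the disorder-fluctuation piece is where the work lies.
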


The technique of adding $g_i(\sigma)$ to enforce the self-overlap to concentrate already appeared in \cite{mourrat2021nonconvex,mourrat2023free}.

We first prove the concentration of self-overlap and then the second part.
In the first part, $i,j$ are always indices in $\{1,\dots,D(D+1)/2\}$.

\begin{lemma}
There is a constant $C>0$ such that, for every $i$,
\begin{align*}
    \int_1^2\E \la\Ll|g_i(\sigma) - \E \la g_i(\sigma)\ra\Rr| \ra_x \d x_i \leq CN^{\frac{25}{32}}
\end{align*}
uniformly in $(x_h)_{h\in \N^3}$ and $(x_j)_{j\neq i}$.
\end{lemma}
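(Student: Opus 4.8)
The plan is to derive the estimate from a standard concentration-of-measure bound for Gibbs averages of a linear statistic as a function of its coupling constant, the same mechanism used to prove the Ghirlanda--Guerra identities via perturbation (see \cite[Section~3.2]{pan}). The key structural observation is that $g_i(\sigma) = e_i\cdot\sigma\sigma^\intercal$ enters the Hamiltonian $\tilde H_N^x(\sigma)$ in \eqref{e.tildeH^x_N} linearly through the term $N^{-1/16} x_i g_i(\sigma)$ from \eqref{e.H^pert}, with $x_i$ ranging over $[1,2]$. Writing $\psi(x_i) = \E\log\iint \exp(\tilde H_N^x(\sigma))\,\d P_N(\sigma)$ as a function of $x_i$ with all other coordinates frozen, one computes $\psi'(x_i) = N^{-1/16}\E\la g_i(\sigma)\ra_x$ and $\psi''(x_i) = N^{-2/16}\E\la (g_i(\sigma) - \la g_i(\sigma)\ra_x)^2\ra_x \geq 0$, so $\psi$ is convex in $x_i$.

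First I would set up the two standard inequalities controlling the fluctuations. For the ``thermal'' fluctuation $\la |g_i(\sigma) - \la g_i(\sigma)\ra_x|\ra_x$, convexity of $\psi$ gives, via the usual argument comparing $\psi'(x_i\pm\delta)$ to difference quotients of $\psi$, a bound of the form $\int_1^2 \E\la (g_i - \la g_i\ra_x)^2\ra_x\,\d x_i \leq C N^{2/16}(\,\text{oscillation of }\psi\text{ over an }O(1)\text{ interval}\,)$, hence $\int_1^2 \E\la|g_i - \la g_i\ra_x|\ra_x\,\d x_i \lesssim N^{1/16}\sqrt{\sup|\psi|}$. For the ``disorder'' fluctuation $|\,\la g_i\ra_x - \E\la g_i\ra_x\,|$, I would use Gaussian concentration for the function $\tilde H_N^x \mapsto \log\iint e^{\tilde H_N^x}$, whose Lipschitz constant in the relevant Gaussian variables is controlled by $\sup_\sigma |\sigma\sigma^\intercal|/\sqrt{N} \lesssim \sqrt N$ (since $P_1$ is supported in the unit ball, $|\sigma_j|\le 1$ so $|\sigma\sigma^\intercal|\le N$ and the covariance \eqref{e.tilde_H_N} scales like $N$), plus the standard derivative-in-$x_i$ bound $N^{-1/16}\la|g_i - \E\la g_i\ra|\ra_x$; combining gives $\int_1^2 \E\la|\la g_i\ra_x - \E\la g_i\ra_x|\ra_x\,\d x_i$ of order at most $N^{1/16}\cdot N^{3/8} = N^{13/32}$ or so after integrating the concentration tail. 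Since $|g_i(\sigma)| = |e_i\cdot\sigma\sigma^\intercal| \le |\sigma\sigma^\intercal| \le N$, the free energy $\psi$ and its oscillations are $O(N)$, so the thermal term contributes $\lesssim N^{1/16}\sqrt N = N^{9/16} = N^{18/32}$. Adding the triangle-inequality split $|g_i - \E\la g_i\ra| \le |g_i - \la g_i\ra_x| + |\la g_i\ra_x - \E\la g_i\ra_x|$ and taking the worse of the two exponents yields the claimed $CN^{25/32}$ after absorbing the precise powers (the exponent $25/32$ is exactly what the bookkeeping with $N^{-1/16}$ perturbation strength against $O(N)$ energies and $O(\sqrt N)$ Lipschitz constants produces).

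The uniformity in $(x_h)_{h\in\N^3}$ and $(x_j)_{j\ne i}$ is automatic: all the bounds above only used convexity of $\psi$ in the single variable $x_i$, the a priori bound $|g_i|\le N$, and the Lipschitz constant of the log-integral in the Gaussian disorder, none of which depend on the frozen coordinates (the extra perturbation terms $x_h c_h H^h_N$ have covariance of the same order $N$ by the choice of $c_h$, so they do not worsen the Lipschitz estimate either — and in fact for the thermal fluctuation they are irrelevant).

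The main obstacle I anticipate is chasing the exponents carefully enough to land exactly on $25/32$ rather than a slightly different power: one must track that the perturbation strength is $N^{-1/16}$, that $g_i$ and the free energy are of size $N$ (not $N^{1/2}$), and that the Gaussian concentration for the disorder fluctuation of $\la g_i\ra_x$ produces a subgaussian tail whose integral against the derivative-in-$x_i$ relation gives the stated rate; a secondary technical point is justifying differentiation under the expectation and the a priori integrability needed to apply the convexity/concentration lemmas, which is routine given the boundedness of the support of $P_1$.
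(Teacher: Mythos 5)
Your high-level plan is the right one and matches the paper's strategy: exploit the linear dependence of $\tilde H_N^x$ on $x_i$, note convexity of the free energy in $x_i$, split into a ``thermal'' piece $|g_i - \la g_i\ra_x|$ and a ``disorder'' piece $|\la g_i\ra_x - \E\la g_i\ra_x|$, control the first via an integrated second derivative and the second via Gaussian concentration. However, the proof of the disorder-fluctuation bound --- which is in fact the \emph{dominant} contribution and the source of the exponent $25/32$ --- is not correct as written.

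The gap is in the step where you pass from concentration of the free energy to concentration of its $x_i$-derivative. You write that Gaussian concentration for $\varphi(x_i)=\log\int e^{\tilde H_N^x}\,\d P_N$ has Lipschitz constant $\lesssim\sqrt N$ (correct, giving $\E|\varphi-\E\varphi|\lesssim N^{1/2}$), and then say ``combining'' this with the derivative relation $\varphi'(x_i)=N^{-1/16}\la g_i\ra_x$ gives an estimate of order $N^{1/16}\cdot N^{3/8}$. But concentration of a convex function does \emph{not} directly yield concentration of its derivative at the same rate; the quantity $\la g_i\ra_x$ is not itself a Lipschitz function of the Gaussian disorder with a useful constant. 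The standard fix --- and what the paper actually does --- is \cite[Lemma~3.2]{pan}: for convex $\varphi,\phi=\E\varphi$ and any $y_i>0$,
\begin{align*}
|\varphi'(x_i)-\phi'(x_i)| \le \phi'(x_i+y_i)-\phi'(x_i-y_i)+\frac{1}{y_i}\sum_{\pm}\Ll(|\varphi-\phi|(x_i\pm y_i)+|\varphi-\phi|(x_i)\Rr).
\end{align*}
Integrating in $x_i$ and using $|\phi'|\le N^{15/16}$ (from $|g_i|\le N$) plus $\E|\varphi-\phi|\le CN^{1/2}$ gives a bound of the form $4N^{15/16}y_i + 3CN^{1/2}/y_i$; optimizing at $y_i=N^{-7/32}$ produces $\int_1^2\E|\varphi'-\phi'|\,\d x_i\lesssim N^{23/32}$, hence $\int_1^2\E|\la g_i\ra_x-\E\la g_i\ra_x|\,\d x_i\lesssim N^{1/16}\cdot N^{23/32}=N^{25/32}$. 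In other words the correct exponent is the geometric mean of the derivative scale $N^{15/16}$ and the free-energy concentration scale $N^{1/2}$, shifted by $N^{1/16}$ --- not the $N^{1/16}\cdot N^{3/8}=N^{14/32}$ (your $N^{13/32}$ is also an arithmetic slip) you propose, which would actually be a \emph{better} bound than $N^{25/32}$ and is not justified. The thermal piece you estimate at $N^{18/32}$ is harmless (the paper gets $N^{17/32}$), but it is the disorder piece that sets the constant $25/32$, and that is exactly where your argument does not close.
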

\begin{proof}
Recall $\tilde H_N^x(\sigma)$ in~\eqref{e.tildeH^x_N}.
Fixing any $(x_h)_{h\in \N^3}$ and $(x_j)_{j\neq i}$, we set
\begin{align*}
    \varphi(x_i) = \log \int \exp \Ll(\tilde H_N^x(\sigma) \Rr)\d P_N(\sigma)
\end{align*}
and $\phi(x_i) =\E \varphi(x_i)$.
Then, we compute the derivatives:
\begin{gather*}
    \varphi'(x_i)  = N^{-\frac{1}{16}} \la g_i(\sigma)\ra_x, \qquad \varphi''(x_i) = N^{-\frac{1}{8}}  \la g_i(\sigma)^2 - \la g_i(\sigma)\ra^2_x \ra_x,
    \\
    \phi'(x_i)  = N^{-\frac{1}{16}} \E \la g_i(\sigma)\ra_x,\qquad 
    \phi''(x_i)  = N^{-\frac{1}{8}} \E \la g_i(\sigma)^2 - \la g_i(\sigma)\ra^2_x \ra_x.
\end{gather*}
Due to our assumption on the support of $P_N$, we have $|g_i(\sigma)|\leq N$ and thus
\begin{align}\label{e.|phi'|<}
    |\phi'(x_i)|\leq N^\frac{15}{16},\quad\forall x_i\in[0,3].
\end{align}
Integrating $\phi''(x_i)$ over $x_i\in[1,2]$ yields
\begin{align*}
    \int_1^2 \E \la g_i(\sigma)^2 - \la g_i(\sigma)\ra^2_x \ra_x \d x_i=N^\frac{1}{8}(\phi'(2)-\phi'(1)) \leq 2N^\frac{17}{16}.
\end{align*}

By the standard concentration argument (\cite[(4.15)]{mourrat2021nonconvex}), we have
\begin{align*}
    \sup_{x}\E |\varphi(x_i)-\E \varphi(x_i)| \leq CN^\frac{1}{2},
\end{align*}
for some constant $C$. 
Writing
\begin{align*}
    \delta = |\varphi(x_i+y_i) -\phi(x_i+y_i)| + |\varphi(x_i-y_i)-\phi(x_i-y_i)| + |\varphi(x_i)-\phi(x_i)|,
\end{align*}
we thus have $\int_1^2\delta \d x_i\leq 3CN^\frac{1}{2}$ for all $y_i\in[0,1]$. It is clear from our computation that $\varphi$ and $\phi$ are convex. By basic properties of convex functions stated in \cite[Lemma~3.2]{pan},
\begin{align*}
    \Ll|\varphi'(x_i) - \phi'(x_i)\Rr|\leq \phi'(x_i+y_i) - \phi'(x_i-y_i) + \frac{\delta}{y_i}.
\end{align*}
Using~\eqref{e.|phi'|<} and the mean value theorem, we get
\begin{align*}
    &\int_1^2 \Ll(\phi'(x_i+y_i)-\phi'(x_i-y_i)\Rr)\d x_i
    \\
    &= \phi(2+y_i) -\phi(2-y_i) - \phi(1+y_i) +\phi(1-y_i) \leq 4 N^\frac{15}{16} y_i.
\end{align*}
Therefore,
\begin{align*}
    \int_1^2 \E\Ll|\varphi'(x_i) - \phi'(x_i)\Rr| \d x_i \leq 4 N^\frac{15}{16} y_i + \frac{3CN^\frac{1}{2}}{y_i}.
\end{align*}
Inserting the formulae of $\varphi'$ and $\phi'$ and setting $y_i= N^{-\frac{7}{32}}$, we get the desired result. 
\end{proof}

\begin{proof}[Proof of~\eqref{i.self-overlap_concent}]
The above lemma implies that, uniformly in $(x_h)_{h\in \N^3}$ and $(x_j)_{j\neq i}$,
\begin{align*}
    \int_1^2 \E \la   \Ll|e_i\cdot\frac{\sigma\sigma^\intercal }{N} - \E \la e_i\cdot \frac{\sigma\sigma^\intercal }{N} \ra_x \Rr|\ra_x\d x_i \leq C N^{-\frac{7}{32}}
\end{align*}
for every $i$.
Since $\{e_i\}$ is an orthonormal basis, we can deduce Proposition~\ref{p.perturbation}~\eqref{i.self-overlap_concent}.
\end{proof}

\begin{proof}[Proof of~\eqref{i.GG}]
We can proceed in the standard way as in \cite[Theorem~3.3]{pan} (with $h$, $N^\frac{7}{16}$, $N^\frac{1}{2}$, $N^{-\frac{1}{2}}x_hc_h H^h_N(\sigma)$ substituted for $p$, $s$, $\nu_N(s)$, $2^{-p}x_pg_p(\sigma)$ therein) to get that for every $h$, there is a constant $C_h$ such that
\begin{align*}
    \int_1^2 \E \la \Ll| H^h_N -\E \la H^h_N\ra_x\Rr|\ra_x \d x_h \leq C_h N^\frac{3}{4}.
\end{align*}
Let $f = f(R^{\leq n})$ be bounded and measurable. Without loss of generality, we assume $\|f\|_\infty\leq 1$. Then,
\begin{align*}
    \Ll|\E\la f H^h_N\ra_x - \E \la f\ra_x \E \la H^h_N\ra_x\Rr| \leq\E \la \Ll|H^h_N - \E \la H^h_N\ra_x \Rr|\ra_x.
\end{align*}
The Gaussian integration by parts gives
\begin{align*}
    &\E\la f H^h_N\ra_x - \E \la f\ra_x \E \la H^h_N\ra_x 
    \\
    & = N^\frac{15}{16}x_hc_h\Ll(\E \la f\Ll(\sum_{l=1}^n R^{1,l}_h - n R^{1,n+1}_h\Rr)\ra_x + \E\la f\ra_x \E \la  R^{1,2}_h -R^{1,1}_h\ra_x\Rr).
\end{align*}
The above three displays yield
\begin{align*}
    &\E_x\Ll| n\E \la f R^{1,n+1}_h \ra_x - \E \la f\ra_x \E \la R^{1,2}\ra_x  -  \sum_{l=2}^n \E \la fR^{1,l}_h\ra_x\Rr|
    \\
    &\leq \E_x \E \la \Ll|R^{1,1}_h - \E \la R^{1,1}_h\ra_x  \Rr|\ra_x  +C_h N^{-\frac{3}{16}} .
\end{align*}
Since $|R^{1,1}|\leq 1$ holds due to our assumption, there is a constant $C'_h$ such that $|R^{1,1}_h - R^{2,2}_h|\leq C'_h |R^{1,1} - R^{2,2}|$ and thus
\begin{align*}
     \E \la \Ll|R^{1,1}_h - \E \la R^{1,1}_h\ra_x  \Rr|\ra_x \leq \E \la \Ll| R^{1,1}_h - R^{2,2}_h \Rr|\ra_x  \leq C'_h \E \la \Ll| R^{1,1} - R^{2,2} \Rr|\ra_x 
     \\
     \leq 2C'_h \E \la \Ll| R^{1,1}  -  \E \la R^{1,1}\ra_x \Rr|\ra_x.
\end{align*}
By Proposition~\ref{p.perturbation}~\eqref{i.self-overlap_concent}, the last term averaged by $\E_x$ vanishes as $N\to\infty$.
Inserting this into the previous display, we get Proposition~\ref{p.perturbation}~\eqref{i.GG}, which completes the proof.
\end{proof}

\section{Lower bound via the Aizenman--Sims--Starr scheme}
\label{s.lower_bd}

We show the lower bound in Theorem~\ref{t.main} using the Aizenman--Sims--Starr scheme from \cite{aizenman2003extended}. This part does not need the convexity of $\xi$.
\begin{proposition}\label{p.lower_bd}
It holds that $\liminf_{N\to\infty} F_N \geq \inf_{\pi\in\Pi}\mathscr{P}(\pi)$.
\end{proposition}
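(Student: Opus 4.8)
The plan is to prove the lower bound via the Aizenman--Sims--Starr scheme, mirroring the scalar Ising argument in \cite{pan} but keeping the product measure $P_N$ intact throughout the cavity computation, which is the advantage of working with the self-overlap--corrected free energy $F_N$. First I would pass to the perturbed free energy: adding the perturbation term $N^{-1/16}H^{\pert,x}_N(\sigma)$ from \eqref{e.H^pert} changes $F_N$ by $o(1)$ uniformly in the choice of $x\in[1,2]^{\cdots}$ (since $c_h$ are summable and $|g_i|\le N$), so it suffices to bound the perturbed free energy $\bar F_N = \E_x\bar F_N^x$ from below. The point of the perturbation is Proposition~\ref{p.perturbation}: along a suitable sequence of scales $N$, after selecting $x$ appropriately, the overlap array $(R^{l,l'})$ satisfies the Ghirlanda--Guerra identities on average in the limit, including for the self-overlap (this is what the second sum in \eqref{e.H^pert} buys us). By Panchenko's theory --- ultrametricity \cite{pan.aom} plus the synchronization mechanism for matrix-valued overlaps --- any subsequential limit of the overlap array is then, up to an orthogonal change of basis, generated by a Ruelle probability cascade, i.e.\ there is $\pi\in\Pi$ such that the limiting law of $R^{1,2}$ is that of $\pi(\alpha^1\wedge\alpha^2)$ under $\E\mathfrak{R}^{\otimes 2}$ (and $R^{1,1}\to\pi(1)$ by self-overlap concentration).

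Next I would run the Aizenman--Sims--Starr decomposition: writing $\bar F_N \geq \liminf_N (A_N - B_N)$ where $A_N$ compares the $(N+1)$-spin and $N$-spin perturbed free energies built from $\tilde H_N$ (hence the choice of covariance \eqref{e.tilde_H_N}), one computes the two cavity terms by Gaussian integration by parts. The correction terms $-\tfrac12 (N+1)\xi(\sigma\sigma^\intercal/(N+1))$ and $-\tfrac12 N\xi(\sigma\sigma^\intercal/N)$ cancel the self-overlap contributions exactly as they did in Guerra's interpolation \eqref{e.dphi(r)}, so the surviving terms involve only $\xi$, $\nabla\xi$, and $\theta$ evaluated at off-diagonal overlaps $R^{1,2}$, together with the cavity field $w^{\nabla\xi\circ\pi}$ and its variance correction $\tfrac12\nabla\xi\circ\pi(1)\cdot\sigma\sigma^\intercal$. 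Taking $N\to\infty$ along the chosen subsequence and invoking the synchronization/RPC representation to replace the Gibbs average of functions of $R^{1,2}$ by the corresponding RPC average, the two terms assemble into exactly $\mathscr{P}_0(\pi)+\tfrac12\int_0^1\theta(\pi(s))\,\d s = \mathscr{P}(\pi)$. Since $\pi\in\Pi$, this gives $\liminf_N F_N \geq \mathscr{P}(\pi)\geq \inf_{\pi\in\Pi}\mathscr{P}(\pi)$. I would also note here that because $R^{1,1}\in\mathcal{K}=\overline{\mathrm{conv}}\{\tau\tau^\intercal:\tau\in\supp P_1\}$ deterministically, the limiting $\pi$ satisfies $\pi(1)\in\mathcal{K}$, which justifies the Remark.

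The main obstacle is the passage from the Ghirlanda--Guerra identities for the matrix-valued overlap array to a clean RPC description of the limit: unlike the scalar case, one must first invoke Panchenko's ultrametricity for vector spins and then the synchronization property, which forces the limiting overlaps to lie (after an orthogonal rotation absorbed into $P_1$) on a monotone curve $s\mapsto\pi(s)$ in $\S^D_+$; verifying that this curve is left-continuous and increasing in the $\geqpsd$ order, and that the self-overlap limit $\pi(1)$ is consistent with it, is the delicate part. A secondary technical point is that Proposition~\ref{p.perturbation} only yields the identities \emph{on average over $x$ and along a subsequence}, so one must use a measurable-selection / compactness argument (as in \cite{pan}) to fix a good sequence of perturbation parameters and extract a further subsequence along which both the free energy converges to its $\liminf$ and the overlap array converges in distribution; this bookkeeping, while standard, has to be done with care because the perturbation parameter space is infinite-dimensional.
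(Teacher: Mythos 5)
Your outline follows the paper's strategy: add the perturbation, telescope to the Aizenman--Sims--Starr cavity difference, concentrate the self-overlap, invoke synchronization plus the RPC characterization to describe the limiting overlap array, and conclude via the corrected cavity computation. The one genuine gap is exactly at the spot you flag as ``the delicate part'' but leave unresolved: the limiting array $R_\infty$ is, in general, \emph{not} generated by a single $\pi\in\Pi$ in the naive sense you assert. Synchronization gives $R^{l,l'}_\infty = \Psi\bigl(\tr R^{l,l'}_\infty\bigr)$, and the off-diagonal traces form an RPC overlap array with some quantile function $\zeta$; but the self-overlap limit $q=\lim\E\la R^{1,1}\ra$ (which equals $\Psi(r)$ with $r=\tr q$) may satisfy $r>\zeta(1)$. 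In that case no single left-continuous increasing $\pi$ can have both ``law of $R^{1,2}$ equals law of $\pi(\alpha^1\wedge\alpha^2)$'' and ``$\pi(1)=q$'' simultaneously, since the first forces $\pi=\Psi\circ\zeta$ a.e.\ and hence $\pi(1)=\Psi(\zeta(1))$ by left-continuity. The paper resolves this by writing $R_\infty$ with an explicit jump at the diagonal, as in \eqref{e.R^l,l'}, and then approximating it weakly by pure RPC arrays $Q_m$ built from step functions $\zeta_m\to\zeta$ that are modified to equal $r$ near $s=1$, so that $\pi_m=\Psi\circ\zeta_m\in\Pi$ does satisfy $\pi_m(1)=q$. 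The Parisi-type functional is then transferred along the two weak convergences $R_{N}\rightharpoonup R_\infty$ and $Q_m\rightharpoonup R_\infty$ using Lemma~\ref{l.F_approx_overlap}. Without this approximation by RPCs whose paths reach $r$ at $s=1$, the bound does not close.

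Two smaller inaccuracies: there is no ``orthogonal change of basis'' to absorb into $P_1$ --- the synchronization result directly produces a monotone Lipschitz map $\Psi:[0,\infty)\to\S^D_+$ applied to the traces, with no rotation. And the cavity fields entering $A_N(x)$ in \eqref{e.A_N(x)} are the processes $Z(\sigma)$, $Y(\sigma)$ indexed by the $N$-spin configuration; the RPC field $w^{\nabla\xi\circ\pi}(\alpha)$ only appears \emph{after} the overlap array has been replaced by its RPC approximation via Lemma~\ref{l.F_approx_overlap} and Remark~\ref{r.P(Gamma,R)}, so the identification of the cavity expression with $\mathscr{P}(\pi)$ happens in the limit, not term-by-term at finite $N$.
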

\noindent Theorem~\ref{t.main} follows from this and Proposition~\ref{p.upp_bd}.

Recall the perturbation defined in~\eqref{e.H^pert}. For each perturbation parameter $x$ and each $N$, we define
\begin{align*}
    F^x_N = \frac{1}{N}\E\log\int\exp\Ll(H_N(\sigma) -\frac{1}{2}N\xi\Ll(\frac{\sigma\sigma^\intercal}{N}\Rr)+ N^{-\frac{1}{16}} H^{\pert,x}_N(\sigma)\Rr)\d P_N(\sigma)
\end{align*}
as the perturbation of $F_N$.
Then, we describe the Gibbs measure that will appear in the cavity computation.
Recall the process $\tilde H_N(\sigma)$ in~\eqref{e.tilde_H_N} and the Gibbs measure $\la \cdot \ra_x$ with Hamiltonian $\tilde H_N^x(\sigma)$ in~\eqref{e.tildeH^x_N}.
We also need two more independent Gaussian processes. Let $(Z(\sigma))_{\sigma\in \R^{D\times N}}$ and $(Y(\sigma))_{\sigma \in \R^{D\times N}}$ be centered $\R^{D}$-valued and real-valued Gaussian processes with covariances $\E Z(\sigma)Z(\sigma')^\intercal = \nabla\xi\Ll(\frac{\sigma\sigma'^\intercal}{N}\Rr)$ and $\E Y(\sigma)Y(\sigma') =\theta\Ll(\frac{\sigma\sigma'^\intercal}{N}\Rr)$. The cavity computation evaluates the difference $(N+1)F_{N+1}^x -NF_N^x$. We write the spins in $F_{N+1}^x$ as $(\sigma,\tau)\in \R^{D\times N}\times \R^{D}$. Defining
\begin{align}
    A_N(x) 
    = \E \log\la \int \exp\Ll(Z(\sigma)\cdot \tau - \frac{1}{2}\nabla\xi\Ll(\frac{\sigma\sigma^\intercal}{N}\Rr)\cdot \tau\tau^\intercal\Rr)  \d P_1(\tau)\ra_x  \notag
    \\
    - \E \log \la \exp\Ll(Y(\sigma)
    - \frac{1}{2}\theta\Ll(\frac{\sigma\sigma^\intercal}{N}\Rr)\Rr) \ra_x \label{e.A_N(x)}
\end{align}
and using the standard computation described in the proof of \cite[Theorem~3.6]{pan}, we can obtain the Aizenman--Sims--Starr representation stated below.

\begin{lemma}\label{l.cavity}
Uniformly in $x$, $(N+1)F_{N+1}^x - NF_N^x = A_N(x) + o(1)$ as $N\to\infty$.
\end{lemma}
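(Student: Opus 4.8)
The plan is to follow the Aizenman--Sims--Starr (cavity) scheme, comparing the system of size $N+1$ to the system of size $N$ by writing $(N+1)F_{N+1}^x - NF_N^x$ as the expectation of the logarithm of a ratio of partition functions, and then identifying the Gaussian processes that appear. First I would expand both $(N+1)F_{N+1}^x$ and $NF_N^x$ as free energies and form their difference; the point is that $(N+1)\xi\bigl(\tfrac{(\sigma,\tau)(\sigma,\tau)^\intercal}{N+1}\bigr)$ governing the size-$(N+1)$ Hamiltonian should, after a Gaussian decomposition, split into a part whose law matches $N\xi\bigl(\tfrac{\sigma\sigma^\intercal}{N}\bigr)$ (the ``bulk'' piece, absorbed into the size-$N$ system) plus a remainder that is a Gaussian field in $\tau$ given $\sigma$, together with a correction field. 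Concretely, one uses the interpolation/comparison identity that for the covariance $N\xi(\tfrac{\sigma\sigma'^\intercal}{N})$ one can find, up to an error that is $o(N)$ uniformly on the support of $P_N$, a decomposition
\begin{align*}
(N+1)\xi\Bigl(\tfrac{(\sigma,\tau)(\sigma,\tau)^\intercal}{N+1}\Bigr) \approx N\xi\Bigl(\tfrac{\sigma\sigma^\intercal}{N}\Bigr) + \nabla\xi\Bigl(\tfrac{\sigma\sigma^\intercal}{N}\Bigr)\cdot\Bigl(\tau\tau^\intercal - \tfrac{\sigma\sigma^\intercal}{N}\Bigr) + \theta\Bigl(\tfrac{\sigma\sigma^\intercal}{N}\Bigr),
\end{align*}
which is just the first-order Taylor expansion of $\xi$ at $\tfrac{\sigma\sigma^\intercal}{N}$ together with the definition of $\theta$ in~\eqref{e.theta}; this is where the processes $Z$ (covariance $\nabla\xi(\tfrac{\sigma\sigma'^\intercal}{N})$) and $Y$ (covariance $\theta(\tfrac{\sigma\sigma'^\intercal}{N})$) and $\tilde H_N$ (covariance $(N+1)\xi(\tfrac{\sigma\sigma'^\intercal}{N+1})$) are read off. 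Matching the corresponding Gaussian families by equality of covariances, one rewrites $(N+1)F_{N+1}^x$ as a free energy in which the $\tau$-integration appears as the numerator inside $\langle\cdot\rangle_x$ and the $Y$-term appears as the denominator, yielding exactly the two terms of $A_N(x)$ in~\eqref{e.A_N(x)}, while the leftover from $NF_N^x$ is matched by the definition of $\langle\cdot\rangle_x$ through $\tilde H_N^x$.

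The key steps in order: (i) write the difference of scaled free energies and telescope to get an expectation of $\log$ of a ratio; (ii) perform the Taylor/covariance decomposition above and control the error — here one uses $|\sigma_i|\le 1$, $|\tau|\le 1$, local Lipschitzness and differentiability of $\xi$, so that the quadratic remainder in the Taylor expansion contributes $O(1/N)$ per spin and hence $o(1)$ after dividing appropriately, and the fluctuations of the mismatched Gaussian pieces are controlled by standard Gaussian concentration (as in~\cite[Section~1.2, 3.6]{pan}); (iii) verify that the perturbation Hamiltonian $N^{-1/16}H^{\pert,x}_N$ and its size-$(N+1)$ counterpart $N^{-1/16}H^{\pert,x}_{N+1}$ differ negligibly — this is a routine estimate since the perturbation is of lower order (the $N^{-1/16}$ prefactor) and the constants $c_h$ are summable, giving uniformity in $x$; (iv) assemble the pieces into $A_N(x) + o(1)$.

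The main obstacle I expect is step (ii): making the covariance decomposition rigorous with a uniform-in-$x$ error bound. One has to be careful that the self-overlap correction terms ($-\tfrac12 N\xi(\tfrac{\sigma\sigma^\intercal}{N})$ in $F_N^x$ and its analogue in $\tilde H_N^x$, and $-\tfrac12\theta$, $-\tfrac12\nabla\xi\cdot\tau\tau^\intercal$ in~\eqref{e.A_N(x)}) line up exactly so that each Gaussian field is paired with precisely half its own variance, which is what keeps all expressions finite and is the whole point of working with the corrected free energy; this bookkeeping, together with checking that the $o(1)$ is genuinely uniform in the perturbation parameter $x\in[1,2]^{\N^3}\times[1,2]^{D(D+1)/2}$ rather than merely pointwise, is the delicate part. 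The rest follows the template of \cite[Theorem~3.6]{pan} essentially verbatim, since our assumption that $P_N=(P_1)^{\otimes N}$ preserves the product structure needed for the cavity coordinate $\tau$ to decouple cleanly.
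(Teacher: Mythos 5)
Your plan is the right one and is exactly the Aizenman--Sims--Starr computation from \cite[Theorem~3.6]{pan} that the paper itself simply cites without reproducing: identify a common ``bulk'' Hamiltonian $\tilde H_N$ of covariance $(N+1)\xi(\frac{\sigma\sigma'^\intercal}{N+1})$, split off $Z(\sigma)\cdot\tau$ for the cavity coordinate and $Y(\sigma)$ as the leftover for the size-$N$ system, pair each Gaussian field with half its variance, and check that the perturbation and the quadratic Taylor remainder are $o(1)$ uniformly in $x$ using $|\sigma_i|,|\tau|\le 1$ and the local Lipschitzness of $\xi$.

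However, the displayed Taylor expansion in your step~(ii) has the wrong sign on $\theta$. The first-order expansion of $\xi$ at $a=\frac{\sigma\sigma^\intercal}{N}$ gives
\begin{align*}
(N+1)\,\xi\Bigl(\tfrac{\sigma\sigma^\intercal+\tau\tau^\intercal}{N+1}\Bigr)
\approx (N+1)\xi(a)+\nabla\xi(a)\cdot\bigl(\tau\tau^\intercal-a\bigr)
= N\xi(a)+\nabla\xi(a)\cdot\tau\tau^\intercal-\theta(a),
\end{align*}
whereas your right-hand side $N\xi(a)+\nabla\xi(a)\cdot(\tau\tau^\intercal-a)+\theta(a)$ evaluates to $(N-1)\xi(a)+\nabla\xi(a)\cdot\tau\tau^\intercal$ and is off by $\theta(a)-\xi(a)$. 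The sign is not cosmetic: the whole bookkeeping depends on the companion expansion $N\xi(\frac{\sigma\sigma'^\intercal}{N})\approx(N+1)\xi(\frac{\sigma\sigma'^\intercal}{N+1})+\theta(\frac{\sigma\sigma'^\intercal}{N})$, so that $H_N\stackrel{d}{=}\tilde H_N+Y$ with $Y$ having nonnegative covariance $\theta$, and it is this $Y$-term that produces the subtracted second line of $A_N(x)$ in~\eqref{e.A_N(x)}; with the sign flipped the covariance would be $-\theta\leq 0$ and the decomposition would not be a valid Gaussian splitting. Relatedly, the ``bulk piece'' absorbed into $\langle\cdot\rangle_x$ has covariance $(N+1)\xi(\frac{\sigma\sigma'^\intercal}{N+1})$ (that of $\tilde H_N$ in~\eqref{e.tilde_H_N}), not $N\xi(\frac{\sigma\sigma'^\intercal}{N})$; the gap between the two is exactly $\theta$ and is what the $Y$-field accounts for. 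Once these two points are fixed, the rest of your proposal (uniformity in $x$ from the $N^{-1/16}$ prefactor and the summability of $c_h$, and the $O(1/N)$ Taylor remainder) matches the standard argument.
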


\noindent
We also need a result on approximating the Parisi-type functional using finitely many entries from the overlap array. The following is a straightforward adaption of \cite[Theorem~1.3]{pan}.

\begin{lemma}\label{l.F_approx_overlap}
We consider the following setting:
\begin{itemize}
    \item Let $\Gamma$ be a probability measure on a separable Hilbert space $\mathcal{H}$ and let $R:\mathcal{H}\times \mathcal{H} \to \R^{D\times D}$ be a measurable function satisfying $|R|\leq 1$;
    \item Assume that there are centered $\R^D$-valued and real-valued Gaussian processes $(Z(\rho))_{\rho\in\supp\Gamma}$ and $(Y(\rho))_{\rho\in \supp\Gamma}$ with covariances $\E Z(\rho)Z(\rho')^\intercal = \nabla\xi\Ll(R(\rho,\rho')\Rr)$ and $\E Y(\rho)Y(\rho') =\theta\Ll(R(\rho,\rho'\Rr))$.
    \item Write $\la\cdot\ra_\Gamma = \Gamma^{\otimes\infty}$ and define
    \begin{align*}
    \mathcal{P}(\Gamma,R) = \E \log \la \int \exp\Ll(Z(\rho)\cdot \tau - \frac{1}{2}\nabla \xi(R(\rho,\rho))\cdot \tau\tau^\intercal\Rr)\d P_1(\tau) \ra_\Gamma 
    \\
    -\E\log \la \exp\Ll(Y(\rho)-\frac{1}{2}\theta(R(\rho,\rho))\Rr)\ra_\Gamma
\end{align*}
    where $\E$ integrates the Gaussian randomness in $Z(\rho)$ and $Y(\rho)$.
\end{itemize}
Then, for every $\eps>0$, there is a bounded continuous function $\mathcal{F}_\eps: \Ll(\R^{D\times D}\Rr)^{n\times n}\to \R$ for some $n\in\N$ such that
\begin{align*}
    \Ll|\mathcal{P}(\Gamma, R) - \la\mathcal{F}_\eps\Ll(\Ll(R\Ll(\rho^l,\rho^{l'}\Rr)\Rr)_{1\leq l,l'\leq n}\Rr)\ra_\Gamma\Rr|\leq \eps
\end{align*}
uniformly for all $(\Gamma, R)$ as described,
where $\Ll(\rho^l\Rr)_{l\in\N}$ is i.i.d.\ sequence under $\la \cdot\ra_\Gamma$.
\end{lemma}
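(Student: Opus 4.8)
The plan is to reduce Lemma~\ref{l.F_approx_overlap} to the corresponding scalar statement \cite[Theorem~1.3]{pan} by treating the matrix-valued overlap $R(\rho,\rho')$ entrywise. The key observation is that the functional $\mathcal{P}(\Gamma,R)$ depends on $R$ only through the two Gaussian processes $Z,Y$ (whose covariances are $\nabla\xi\circ R$ and $\theta\circ R$) and through the diagonal terms $\nabla\xi(R(\rho,\rho))$, $\theta(R(\rho,\rho))$ appearing in the self-overlap corrections. Since $\xi$ is differentiable and locally Lipschitz on the bounded set $\{|a|\le 1\}$, both $\nabla\xi$ and $\theta$ are Lipschitz there; this is the mechanism that will let us pass from control of $R$ to control of the processes.

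First I would recall the structure of the proof of \cite[Theorem~1.3]{pan}: one writes the two $\E\log\la\cdot\ra_\Gamma$ terms using the standard cavity/Gaussian representation, truncates the Gaussian fields at a large level, discretizes, and invokes a Gaussian comparison (Slepian-type interpolation) to show that replacing the true covariance structure by one built from finitely many overlap values $\big(R(\rho^l,\rho^{l'})\big)_{l,l'\le n}$ changes $\mathcal{P}(\Gamma,R)$ by at most $\eps$, with the error uniform over all admissible $(\Gamma,R)$. The point is that $\mathcal{P}(\Gamma,R)$ is, up to uniformly small error, a bounded continuous function of the law of the finite overlap array, and that law is itself encoded in $\la\mathcal{F}_\eps\big((R(\rho^l,\rho^{l'}))_{l,l'\le n}\big)\ra_\Gamma$ for a suitable bounded continuous $\mathcal{F}_\eps$. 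In the vector case, I would run exactly this scheme componentwise: decompose $Z(\rho)\cdot\tau = \sum_{i=1}^D Z_i(\rho)\tau_i$ and note that the joint covariance of $(Z_1,\dots,Z_D,Y)$ across replicas is a fixed Lipschitz image of the overlap matrix $R(\rho,\rho')$, so the Gaussian comparison argument applies verbatim to the $(D^2+D+1)$-dimensional vector of relevant quadratic forms. The boundedness of $P_1$ (supported in the unit ball) gives $|\tau|\le 1$, which bounds the linear term $Z(\rho)\cdot\tau$ and the quadratic corrections uniformly, exactly as the $\pm1$ spins do in the scalar case.

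The main steps, in order: (i) fix $\eps>0$ and use local Lipschitz continuity of $\nabla\xi,\theta$ on $\{|a|\le1\}$ to reduce approximating $\mathcal{P}(\Gamma,R)$ to approximating it as a function of finitely many moments of the array $\big(R(\rho^l,\rho^{l'})\big)$; (ii) truncate and discretize the Gaussian processes $Z,Y$ so that the Gibbs averages become functions of finitely many replicas, incurring error $\le\eps/3$ uniformly — here one uses that the number of replicas needed depends only on $\xi$, $D$, and $\eps$, not on $(\Gamma,R)$; (iii) apply the Gaussian interpolation/Slepian bound to replace the continuum covariance by the $n\times n$ overlap-matrix data, absorbing another $\eps/3$; (iv) observe that the resulting quantity is $\la\mathcal{F}_\eps((R(\rho^l,\rho^{l'}))_{l,l'\le n})\ra_\Gamma$ for a bounded continuous $\mathcal{F}_\eps$, since it is a finite-dimensional integral of continuous bounded integrands against the empirical law of the truncated array; (v) collect the errors. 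I expect the main obstacle to be verifying that \emph{all} approximation errors are genuinely uniform over the infinite class of pairs $(\Gamma,R)$ — in particular that the truncation level and the number $n$ of replicas can be chosen depending only on $\eps$, $D$, and the modulus of continuity of $\xi$ on the unit ball. This uniformity is exactly what \cite[Theorem~1.3]{pan} supplies in the scalar setting, so the real work is checking that its proof only ever uses the bound $|R|\le1$, the continuity of the maps $a\mapsto\nabla\xi(a)$, $a\mapsto\theta(a)$, and the boundedness of $\supp P_1$, all of which hold here; once that is confirmed, the adaptation is routine and I would simply cite \cite[Theorem~1.3]{pan} for the remaining details.
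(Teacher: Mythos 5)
Your proposal is correct and takes the same route as the paper, which itself offers no proof but simply declares the lemma to be ``a straightforward adaptation of \cite[Theorem~1.3]{pan}.'' Your sketch correctly identifies the ingredients that make the adaptation go through unchanged --- the uniform bound $|R|\le 1$, the boundedness of $\supp P_1$, the local Lipschitz continuity of $\nabla\xi$ and $\theta$ on the unit ball, and the fact that $\mathcal{P}(\Gamma,R)$ depends on $(\Gamma,R)$ only through the joint Gaussian covariance structure of $(Z,Y)$ across replicas --- so the two are in agreement.
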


The pair $(\Gamma, R)$ defines an abstract overlap structure that is relevant here.

\begin{remark}\label{r.P(Gamma,R)}
We show that familiar objects are of the form $\mathcal{P}(\Gamma, R)$:
\begin{itemize}
    \item $A_N(x) = \E\mathcal{P}(\Gamma^x,R_N)$ for $\Gamma^x $ being the distribution of $\frac{1}{\sqrt{N}}\sigma$ under $\la \cdot \ra_x$ and $R_N:(\sigma,\sigma')\mapsto \frac{\sigma\sigma'^\intercal}{N}$, where $\E$ integrates the Gaussian randomness in $\la \cdot\ra_x$;
    \item $\mathscr{P}(\pi) = \E \mathcal{P}(\mathfrak{R}, R)$ for $\mathfrak{R}$ the aforementioned RPC with overlap distributed uniformly on $[0,1]$ and $R:(\alpha,\alpha')\mapsto \pi(\alpha\wedge \alpha')$, where $\E$ integrates the randomness in $\mathfrak{R}$.
\end{itemize}
It is straightforward to check the validity of the first identity. To see the second, one can use the computation in~\eqref{e.logexpy} with $N=1$ to identify the second term in $\E \mathcal{P}(\mathfrak{R}, R)$ with the second term in $\mathscr{P}(\pi)$.
\end{remark}

For a sequence of random arrays $(A_n)_{n\in\N}$ where $A_n = \Ll(A^{l,l'}_n\Rr)_{l,l'\in\N}$, we say that $A_n$ \textit{converges weakly} to some random array $A =\Ll(A^{l,l'}\Rr)_{l,l'\in\N}$ and write
\begin{align*}
    A_n \rightharpoonup A \quad \text{as }n\to\infty
\end{align*}
if, for every $k\in\N$, $A_n^{\leq k}$ converges in distribution to $A^{\leq k}$ as $n\to\infty$.

For any real-valued random variable $X$, its \textit{quantile function} is the left-continuous increasing function $\zeta:[0,1]\to\R$ such that
\begin{align*}
    \E g(X) = \int_0^1 g(\zeta(s))\d s
\end{align*}
for every bounded measurable function $g$. The quantile function can be obtained by taking the left-continuous inverse of the probability distribution function, and vice versa.

\begin{proof}[Proof of Proposition~\ref{p.lower_bd}]
Due to the presence of $N^{-\frac{1}{16}}$, we can verify $\lim_{N\to\infty} \sup_{x} |F_N- F^x_N|=0$. Therefore, it suffices to bound $\liminf_{N\to\infty}\E_x F^x_N$ from below. 
Since
\begin{align*}
    \liminf_{N\to\infty} r_N\geq \liminf_{N\to\infty} (N+1)r_{N+1}-Nr_N
\end{align*}
holds for a sequence of real numbers $(r_N)_{N\in \N}$, we can use Lemma~\ref{l.cavity} to see that
\begin{align*}
    \liminf_{N\to\infty}\E_x F^x_N \geq \liminf_{N\to\infty} \E_x A_N(x).
\end{align*}
We want to choose a sequence of permutation parameters. Let $((f_j,n_j,h_j))_{j\in\N}$ be an enumeration of
\begin{align*}
    \Ll\{(f,n,h):n\in\N;\; h\in \N^3;\; \text{$f$ is an monomial of $\Ll(R^{l,l'}_{i,j}\Rr)_{1\leq i,j\leq D;\; 1\leq l,l'\leq n}$}\Rr\}
\end{align*}
and we set
\begin{align*}
\Delta_N(x) = \E\la \Ll| R^{1,1} - \E\la R^{1,1}\ra_x\Rr|\ra_x +  \sum_{j=1}^\infty 2^{-j} \Delta^x(f_j,n_j,h_j)
\end{align*}
where $\Delta^x(f,n,h)$ is defined in~\eqref{e.Delta^x}. By Proposition~\ref{p.perturbation}, we have that $\lim_{N\to\infty}\E_x \Delta_N(x)=0$. 
Using the same argument in the proof of \cite[Lemma~3.3]{pan}, we can find a sequence $\Ll(x^N\Rr)_{N\in\N}$ such that
\begin{gather}
\lim_{\N\to\infty} \Delta_N\Ll(x^N\Rr)=0, \label{e.Deltato0}
\\
    \liminf_{N\to\infty}\E_x A_N(x) \geq \liminf_{N\to\infty} A_N\Ll(x^N\Rr).\notag
\end{gather}
Hence, it suffices to evaluate $\liminf_{N\to\infty} A_N\Ll(x^N\Rr)$.

Choose an increasing sequence of integers $\Ll(N^{(0)}_k\Rr)_{k\in\N}$ such that
\begin{align}\label{e.liminfA=limA}
    \liminf_{N\to\infty} A_N\Ll(x^N\Rr) = \lim_{k\to\infty} A_{N^{(0)}_k}\Ll(x^{N^{(0)}_k}\Rr).
\end{align}
Let us make the dependence of $R$ on $N$ explicit by writing $R_N =R$ for spins in $\R^{D\times N}$. Since $R^{1,1}_N \in \S^D_+$ and $|R^{1,1}_N|\leq 1$, we can extract a subsequence $\Ll(N^{(1)}_k\Rr)_{k\in\N}$ from $\Ll(N^{(0)}_k\Rr)_{k\in\N}$ along which
$\E \la R^{1,1}_N \ra_{x^N}$ converges to some $q \in \S^D_+$. 
Since each $ R^{l,l'}_N$ is bounded, we can extract from $\Ll(N^{(1)}_k\Rr)_{k\in\N}$ a further subsequence $\Ll(N^{(2)}_k\Rr)_{k\in\N}$ along which 
\begin{align}\label{e.RtoR}
    R_{N^{(2)}_k} \rightharpoonup R_\infty\quad\text{as }k\to\infty
\end{align}
for some random array $R_\infty$. The distribution of $R_{N^{(2)}_k}$ is induced by $\E\la\cdot\ra_{x^{N^{(2)}_k}}$ and the distribution of $R_\infty$ is induced by $\E\la\cdot\ra_{\mathfrak{R}}$ to be explained below. Due to the concentration of the self-overlap implied by~\eqref{e.Deltato0} and the choice of $\Ll(N^{(1)}_k\Rr)_{k\in\N}$, we have that
\begin{align}\label{e.diag_R}
    R_\infty^{l,l} =q,\quad \forall l\in\N.
\end{align}
As a result of~\eqref{e.Deltato0}, $R_\infty$ satisfies the Ghirlanda--Guerra identities. By the synchronization result \cite[Theorem~4]{pan.vec}, there is a Lipschitz function $\Psi:[0,\infty)\to\S^D_+$ satisfying $\Psi(s)\geqpsd \Psi(s')$ for all $s\geq s'$ such that 
\begin{align}\label{e.R=Phi(tr(R))}
    R^{l,l'}_\infty = \Psi\Ll(\tr\Ll(R^{l,l'}_\infty\Rr)\Rr) \quad \text{a.s.}\ \forall l,l'\in\N.
\end{align}

Denote the quantile function of $\tr\Ll(R^{1,2}_\infty\Rr)$ by $\zeta$. Since the Ghirlanda--Guerra identities hold for $\Ll(\tr\Ll(R^{l,l'}_\infty\Rr)\Rr)_{l,l'\in\N}$, the Dovbysh--Sudakov representation (\cite[Theorem~1.7]{pan}) together with the characterization of RPCs by the overlap distribution (\cite[Theorems~2.13 and~2.17]{pan}) implies that $\Ll(\tr\Ll(R^{l,l'}_\infty\Rr)\Rr)_{l\neq l'}$ has the law of a RPC with overlap distribution given by the quantile function $\zeta$. Therefore, we can represent $\Ll(\tr\Ll(R^{l,l'}_\infty\Rr)\Rr)_{l\neq l'}$ by $\tr\Ll(R^{l,l'}_\infty\Rr) = \zeta\Ll(\alpha^l\wedge\alpha^{l'}\Rr)$ for all $l\neq l'$, where $\Ll(\alpha^{l}\Rr)_{l\in\N}$ is sampled from $\mathfrak{R}^{\otimes \infty}$. 

Then, we want to obtain a representation of the entire array $R_\infty$.
First, we look for an upper bound for $\zeta$. 
Note that~\eqref{e.R=Phi(tr(R))} implies $R^{1,2}_\infty \in\S^D_+$. Since the Cauchy--Schwarz inequality implies that $v\cdot R^{1,2}_N v\leq \frac{1}{2} v\cdot \Ll(R^{1,1}_N+R^{2,2}_N\Rr)v$ for every $v\in\R^D$, we can deduce from~\eqref{e.diag_R} that $ R^{1,2}_\infty\leqpsd q$ and thus $\tr\Ll(R^{1,2}_\infty\Rr)\leq \tr(q)$ a.s. Setting $r=\tr(q)$, we get
\begin{align}\label{e.zeta(s)<r}
    \zeta(s)\leq r,\quad\forall s\in[0,1].
\end{align}
Then,~\eqref{e.R=Phi(tr(R))} together with~\eqref{e.diag_R} also implies
\begin{align}\label{e.Phi(r)=q}
    \Psi(r) = q.
\end{align}
Hence, we can represent $R_\infty$ in the following way
\begin{align}\label{e.R^l,l'}
    R^{l,l'}_\infty = \Psi\Ll(\zeta\Ll(\alpha^l\wedge\alpha^{l'}\Rr)+\mathds{1}_{l=l'}\Ll(r-\zeta\Ll(\alpha^l\wedge\alpha^{l'}\Rr)\Rr)\Rr),\quad\forall l,l'\in\N.
\end{align}
So, the distribution of the random array $R_\infty$ is induced by $\E\la\cdot\ra_{\mathfrak{R}}$ where $\la\cdot\ra_\mathfrak{R} = \mathfrak{R}^{\otimes \infty}$ and $\E$ integrates the randomness in $\mathfrak{R}$. 

Due to the possibility that $r>\zeta(1)$, in general, $R_\infty$ is not a pure RPC.
Hence, we introduce an approximation of $R_\infty$ by RPCs.
Choose a sequence $(\zeta_m)_{m\in\N}$ of left-continuous increasing step functions from $[0,1]$ to $[0,r]$ that converges to $\zeta$ as $m\to\infty$ in $L^1([0,1])$. For each $m$, allowed by~\eqref{e.zeta(s)<r}, we can modify $\zeta_m$ to ensure that $\zeta_m (s)=r$ for $s$ in a small neighborhood of $1$.

For each $m$, define $Q_m = \Ll(Q^{l,l'}_m\Rr)_{l,l'\in\N}$ by
\begin{align*}
    Q^{l,l'}_m = \Psi\Ll(\zeta_m\Ll(\alpha^l\wedge \alpha^{l'}\Rr)\Rr), \quad \forall l,l'\in\N,
\end{align*}
where $\Ll(\alpha^l\Rr)_{l\in\N}$ is again sampled from $\mathfrak{R}^{\otimes\infty}$.
By the convergence of $\zeta_m$ and the continuity of RPCs in the overlap distribution (\cite[Theorem 2.17]{pan}), we can deduce that $\Ll(Q^{l,l'}_m\Rr)_{l\neq l'}$ converges weakly to $\Ll(R^{l,l'}_\infty\Rr)_{l\neq l'}$ under $\E\la\cdot\ra_\mathfrak{R}$. Due to $\zeta_m(1)=r$ and~\eqref{e.Phi(r)=q}, we have $Q^{l,l}_m = q =R^{l,l}_\infty$. Therefore, we conclude that, under $\E\la\cdot\ra_\mathfrak{R}$,
\begin{align}\label{e.QtoR}
    Q_m \rightharpoonup R_\infty \quad\text{as }m\to\infty.
\end{align}

Fix any $\eps>0$. Let $\mathcal{F}_\eps$ be given in Lemma~\ref{l.F_approx_overlap} and recall Remark~\ref{r.P(Gamma,R)}. Notice that $\Psi\circ\zeta_m\in\Pi$ and the distribution of $Q_m$ is characterized by $\mathfrak{R}$ and $(\alpha,\alpha')\mapsto \Psi(\zeta_m(\alpha\wedge\alpha'))$, which satisfies the condition of Lemma~\ref{l.F_approx_overlap}.
For any $r,r'\in\R$, we write $r\approx_\eps r'$ if $|r-r'|\leq \eps$. Using these,~\eqref{e.RtoR}, and~\eqref{e.QtoR}, we can find sufficiently large $k$ and $m$ such that
\begin{align*}
    A_{N^{(2)}_k}\Ll(x^{N^{(2)}_k}\Rr) &\approx_\eps  \E \la \mathcal{F}_\eps\Ll(R^{\leq n}_{N^{(2)}_k}\Rr)\ra_{x^{N^{(2)}_k}} \approx_\eps \E \la \mathcal{F}_\eps\Ll(R^{\leq n}_\infty\Rr)\ra_\mathfrak{R}
    \\
    &\approx_\eps \E \la \mathcal{F}_\eps\Ll(Q^{\leq n}_m\Rr)\ra_\mathfrak{R} \approx_\eps \mathscr{P}(\Psi\circ\zeta_m).
\end{align*}
Using~\eqref{e.liminfA=limA} and the fact that $\Ll(N^{(2)}_k\Rr)_{k\in\N}$ is a subsequence of $\Ll(N^{(0)}_k\Rr)_{k\in\N}$, we obtain from the above display that
\begin{align*}
    \liminf_{N\to\infty}A_N\Ll(x^N\Rr)\geq \inf_{\pi\in\Pi}\mathscr{P}(\pi) - 4\eps.
\end{align*}
The desired lower bound follows by sending $\eps\to0$.
\end{proof}

\section{Removing the correction term}
\label{s.remove}

We remove the correction and prove Theorem~\ref{t.remove} by using the Hamilton--Jacobi technique in \cite[Section~5]{mourrat2020extending} which was set in the case $D=1$. For $D\geq 1$, we consider the equation on the cone of positive definite matrices and thus need results from \cite{chen2022hamilton2}.

Recall that we have endowed $\S^D$ with the Frobenius inner product, which induces the natural topology on $\S^D$.
For $N\in\N$ and $(t,x) \in [0,\infty)\times\S^D$, we define
\begin{gather}\label{e.cF_N}
    \cF_N(t,x) = \E \hat \cF_N(t,x),
    \\
    \hat\cF_N(t,x) = \frac{1}{N} \log\int \exp\Ll(H_N(\sigma) - \frac{N}{2}\xi\Ll(\frac{\sigma\sigma^\intercal}{N}\Rr) +tN\xi\Ll(\frac{\sigma\sigma^\intercal}{N}\Rr) + x\cdot \sigma\sigma^\intercal \Rr)\d P_N(\sigma). \notag
\end{gather}
Since the computations in this section only involve the self-overlap, we set $R = \frac{1}{N}\sigma\sigma^\intercal$. We denote the derivatives in $t$ and $h$ by $\partial_t$ and $\nabla$ respectively. 
Here, $\nabla$ is defined with respect to the Frobenius inner product on $\S^D$. 
Recall that we have chosen an orthogonal basis $\{e_i\}_{i=1}^{D(D+1)/2}$ of $\S^D$, and we define the Laplace operator $\Delta = \sum_{i=1}^{D(D+1)/2}(e_i\cdot\nabla)^2$. We often write $\cF_N =\cF_N(t,x)$ for simplicity. 
\begin{lemma}\label{l.cF_N_properties}
Assume that $\xi$ is convex on $\S^D_+$. The following holds:
\begin{itemize}
    \item for each $N$, $\cF_N$ is Lipschitz, convex, and increasing in the sense that $\cF_N(t,x) \leq \cF_N(t',x')$ if $t\leq t'$ and $x\leqpsd x'$;
    \item the Lipschitzness is uniform in $N$, namely, $\sup_{N\in\N}\|\cF_N\|_\mathrm{Lip}<\infty$;
    \item there is a constant $C>0$ such that, everywhere on $(0,\infty)\times \S^D$ and for every $N$,
    \begin{align}\label{e.approx_hj}
        0\leq \partial_t \cF_N - \xi\Ll(\nabla \cF_N\Rr) \leq C\Ll(N^{-1} \Delta \cF_N\Rr)^\frac{1}{2} + C\E \Ll|\nabla\hat \cF_N -\nabla \cF_N\Rr|.
    \end{align}
\end{itemize}
\end{lemma}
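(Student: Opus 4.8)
\textbf{Proof strategy for Lemma~\ref{l.cF_N_properties}.}

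The plan is to establish the three bullet points in order, since each feeds into the next. For the first bullet, I would compute the first and second derivatives of $\hat\cF_N$ in $t$ and in the coordinates $x_i = e_i\cdot x$. The key observation is that $\partial_t \hat\cF_N = \la \xi(R)\ra$ and $\partial_{x_i}\hat\cF_N = \la g_i(\sigma)/N\ra = \la e_i\cdot R\ra$, where $\la\cdot\ra$ is the Gibbs measure associated with the Hamiltonian in~\eqref{e.cF_N}; since $\xi\ge 0$ on $\S^D_+$ and $R\in\S^D_+$ (because $P_1$ is supported in the unit ball, so $\sigma\sigma^\intercal \geqpsd 0$), monotonicity in $t$ follows, and monotonicity in $x$ with respect to $\leqpsd$ follows from~\eqref{e.psd_fact}. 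Convexity is the standard fact that $\hat\cF_N$ is a log-Laplace transform in the linear parameters $(t,x)\mapsto tN\xi(R)+x\cdot\sigma\sigma^\intercal$ — wait, $t$ is not linear here because $\xi(R)$ depends on $\sigma$ but not on $t$, so actually $(t,x)\mapsto tN\xi(R)+x\cdot\sigma\sigma^\intercal$ \emph{is} affine in $(t,x)$ for fixed $\sigma$, hence $\hat\cF_N$ is convex as a function of $(t,x)$, and convexity is preserved under $\E$. Lipschitzness follows from the a priori bounds $|\xi(R)|\le \sup_{|a|\le 1}|\xi(a)| =: L_\xi < \infty$ (using local Lipschitzness of $\xi$ and $|R|\le 1$) and $|e_i\cdot R|\le 1$; these bounds are uniform in $N$, giving the second bullet.

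For the third bullet, the heart of the matter, I would use Gaussian integration by parts on the term $H_N(\sigma)$. Writing $\la\cdot\ra$ for the Gibbs measure and $R^{1,2} = \frac1N\sigma^1(\sigma^2)^\intercal$, the standard computation (as in~\eqref{e.dphi(r)}, with the interpolation parameter here being $t$ and the self-overlap correction already subtracted) gives
\begin{align*}
    \partial_t \cF_N = \E\la \xi(R^{1,1})\ra, \qquad
    \E\la H_N(\sigma)\text{-term}\ra = \frac{N}{2}\E\la \xi(R^{1,1}) - \xi(R^{1,2})\cdot(\ldots)\ra,
\end{align*}
so that after the self-overlap cancellation one is left with $\partial_t\cF_N - \xi(\nabla\cF_N)$ controlled by a Gibbs average of $\xi(R^{1,1}) - \xi(\E\la R^{1,1}\ra)$ plus the variance-type term $\E\la R^{1,1} - R^{1,2}\ra$-fluctuations. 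More precisely: the nonnegativity $\partial_t\cF_N - \xi(\nabla\cF_N)\ge 0$ comes from Jensen's inequality, since $\partial_t\cF_N = \E\la\xi(R)\ra \ge \xi(\E\la R\ra) = \xi(\nabla\cF_N)$ using convexity of $\xi$ on $\S^D_+$ and $\nabla\cF_N = \E\la R\ra \in \S^D_+$. For the upper bound, I would write $\xi(R) - \xi(\nabla\cF_N) \le \nabla\xi(R)\cdot(R - \nabla\cF_N)$ (convexity), take the Gibbs-then-expectation average, and split $R - \nabla\cF_N = (R - \la R\ra) + (\la R\ra - \E\la R\ra)$; the first piece is handled by Cauchy--Schwarz against $\E\la|R - \la R\ra|^2\ra$, which is $\le D(D+1)/2$ times a sum of $\partial_{x_i}^2$-type terms, i.e.\ $\le C N^{-1}\Delta\cF_N$ after accounting for the $N$-scaling in $\partial_{x_i}\hat\cF_N = \la e_i\cdot R\ra$ versus $\partial_{x_i}^2\hat\cF_N = N\la (e_i\cdot R)^2 - \la e_i\cdot R\ra^2\ra$; the second piece $|\la R\ra - \E\la R\ra| = |\nabla\hat\cF_N - \nabla\cF_N|$ gives exactly the last term. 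One must also bound the $|\nabla\xi(R)|$ prefactor uniformly, which holds since $\nabla\xi$ is locally bounded and $|R|\le 1$.

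\textbf{Main obstacle.} The delicate point is organizing the integration-by-parts bookkeeping so that \emph{all} terms involving $R^{1,1}$ (the self-overlap) from the differentiation of $H_N$ cancel against the correction term $-\frac{N}{2}\xi(R)$, leaving only cross-replica quantities $R^{1,2}$ — this is the same cancellation as in~\eqref{e.dphi(r)} but now with the external-field parameters $(t,x)$ present, and one must check the field terms $x\cdot\sigma\sigma^\intercal$ (being deterministic given $\sigma$) contribute nothing to the Gaussian IBP. A secondary technical nuisance is tracking the powers of $N$: the second-order $x$-derivatives carry a factor $N$ relative to what one naively expects because $g_i(\sigma) = e_i\cdot\sigma\sigma^\intercal = N(e_i\cdot R)$, so the Gibbs variance of $e_i\cdot R$ is $N^{-1}\partial_{x_i}^2\cF_N$, which is precisely why the bound reads $C(N^{-1}\Delta\cF_N)^{1/2}$. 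Once these two points are handled, the rest is routine convexity and Cauchy--Schwarz.
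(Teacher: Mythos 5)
Your first two bullets match the paper's proof: the first derivatives $\partial_t \cF_N = \E\la\xi(R)\ra$ and $\nabla\cF_N = \E\la R\ra$, convexity from the affine dependence of the exponent on $(t,x)$ (equivalently, from the variance form of the Hessian), monotonicity from $\xi\geq 0$ and $R\in\S^D_+$ via~\eqref{e.psd_fact}, and Lipschitzness uniform in $N$ from $|R|\leq 1$ and local Lipschitzness of $\xi$. Your treatment of the third bullet ultimately also lands on the paper's argument: nonnegativity from Jensen applied to $\xi$ convex on $\S^D_+$, and the upper bound from splitting $R-\nabla\cF_N = (R-\la R\ra)+(\la R\ra-\E\la R\ra)$, Cauchy--Schwarz, and the identity $\E\la|R-\la R\ra|^2\ra = N^{-1}\Delta\cF_N$ (your minor variant of using the convexity inequality $\xi(R)-\xi(\nabla\cF_N)\leq\nabla\xi(R)\cdot(R-\nabla\cF_N)$ with $|\nabla\xi(R)|$ bounded, instead of local Lipschitzness of $\xi$ directly, is fine).

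However, the framing of your third bullet in terms of Gaussian integration by parts on $H_N(\sigma)$, and the ``main obstacle'' you identify about organizing the IBP bookkeeping so that self-overlap terms from differentiating $H_N$ cancel against $-\tfrac{N}{2}\xi(R)$, is a red herring and does not apply to this lemma. In the definition~\eqref{e.cF_N}, the fields $tN\xi\bigl(\tfrac{\sigma\sigma^\intercal}{N}\bigr)$ and $x\cdot\sigma\sigma^\intercal$ are deterministic functions of $\sigma$, not Gaussian, and $H_N(\sigma)$ appears at full strength without any interpolation parameter. So $\partial_t\hat\cF_N$ and $\nabla\hat\cF_N$ are just Gibbs averages of the derivatives of the deterministic part of the exponent, with no IBP and no cross-replica terms $R^{1,2}$ appearing at all; the correction term $-\tfrac{N}{2}\xi(R)$ plays no cancellation role here and is simply part of the reference Hamiltonian defining $\la\cdot\ra$. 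The self-overlap cancellation via Gaussian IBP is the mechanism behind~\eqref{e.dphi(r)} in the proof of the upper bound of Theorem~\ref{t.main}, not behind this lemma. Your ``more precisely'' paragraph is correct precisely because it abandons that framing; the earlier display $\E\la H_N(\sigma)\text{-term}\ra = \frac{N}{2}\E\la\cdots\ra$ should be deleted, as it does not correspond to any step of the actual argument.
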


The convexity of $\xi$ is only needed for the lower bound in~\eqref{e.approx_hj}. Notice that $\xi$ is only required to be convex on $\S^D_+$ instead of the entire space $\R^{D\times D}$.

\begin{proof}

For $(t,x) \in (0,\infty)\times \S^D$, we can compute that
\begin{align}\label{e.1st_der_FN}
    \partial_t \cF_N =\E\la \xi(R)\ra,  \qquad \nabla \hat\cF_N = \la R \ra, \qquad \nabla \cF_N = \E \la R \ra
\end{align}
and, for any $(s,y)\in\R\times \S^D$,
\begin{align}\label{e.2nd_der_FN}
    \frac{\d^2}{\d \eps^2}\hat\cF_N(t+\eps s, x+\eps y)\,  \Big|_{\eps =  0} = N  \la \Ll(s \xi(R)+y\cdot R\Rr)^2 - \la s \xi(R)+y\cdot R \ra^2 \ra\geq 0. 
\end{align}
Since $|R|\leq 1$ a.s.\ and $\xi$ is locally Lipschitz, we have that $\mathcal{F}_N$ is Lipschitz in both variables with $\sup_{N}\| \mathcal{F}_N\|_\mathrm{Lip}<\infty$.
Since $\xi\geq 0$ on $\S^D_+$ and $R\in \S^D_+$, we get that $(\partial_t,\nabla )\cF_N\in [0,\infty)\times \S^D_+$, which implies that $\cF_N$ is increasing. Recognizing that the second-order derivative is a variance, we deduce the convexity of $\cF_N$. Setting $s=0$ and $y=e_i$ for each~$i$ in~\eqref{e.2nd_der_FN} and summing up, we obtain
\begin{align*}
    \Delta \cF_N = N  \E \la | R|^2 - |\la  R\ra |^2 \ra = N  \E \la | R - \la  R\ra |^2 \ra,
\end{align*}
which together with~\eqref{e.1st_der_FN} and the local Lipschitzness of $\xi$ implies the upper bound in~\eqref{e.approx_hj}. The lower bound in~\eqref{e.approx_hj} follows from~\eqref{e.1st_der_FN}, the convexity of $\xi$ on $\S^D_+$, the observation $R\in\S^D_+$, and Jensen's inequality.
\end{proof}

From~\eqref{e.approx_hj}, $\cF_N$ is expected to be a viscous approximation of the solution $f$ to the equation
\begin{align}\label{e.hj}
\partial_t f- \xi(\nabla f) =0 \quad \text{on $\Ll(0,\infty\Rr)\times \S^D$}. 
\end{align}
We make sense of the solution to~\eqref{e.hj} in the viscosity sense. Let $O,U\subset\S^D$ satisfy $O\subset U$. A function $f:[0,\infty)\times U\to\R$ is a \textit{viscosity subsolution} (respectively, \textit{supersolution}) of
\begin{align*}
    \partial_t f- \xi(\nabla f) =0 \quad \text{on $\Ll(0,\infty\Rr)\times O$}
\end{align*}
if whenever there is a smooth $\phi:(0,\infty)\times O\to\R$ such that $f-\phi$ achieves a local maximum (respectively, minimum) at some $(t,x) \in (0,\infty)\times O$, we have $(\partial_t \phi-\xi(\nabla\phi))(t,x)\leq 0$ (respectively, $\geq 0$). If $f$ is both a viscosity subsolution and supersolution, we call $f$ a \textit{viscosity solution}.

Since $\S^D$ is isometric to $\R^{D(D+1)/2}$ via the orthogonal basis $\{e_i\}$, all classical theory of viscosity solutions are available for~\eqref{e.hj}. For instance, due to the assumption that $\xi$ is locally Lipschitz, \cite[Theorem~1 in Section~10.2]{evans2010partial} ensures the uniqueness of the solution to~\eqref{e.hj} given an initial condition.

Recall $\mathscr{P}(\pi,x)$ defined in~\eqref{e.parisi_functional} and $\mathscr{P}(\pi)=\mathscr{P}(\pi,0)$. 
We denote by $\psi$ the pointwise limit of $\cF_N(0,\cdot)$ (if it exists).
Applying Theorem~\ref{t.main} with $P_1$ replaced by normalized $e^{x\cdot\sigma\sigma^\intercal}\d P_1(\sigma)$, we can get
\begin{align}\label{e.psi=}
    \text{$\xi$ is convex on $\R^{D\times D}$}\quad \implies \quad \psi(x)=\lim_{N\to\infty}\cF_N(0,x) = \inf_{\pi\in\Pi}\mathscr{P}(\pi,x),\quad\forall x\in \S^D.
\end{align}

By the Lipschitzness of $\cF_N$ uniform in $N$ as stated in Lemma~\ref{l.cF_N_properties}, if $\cF_N$ converges pointwise on a dense set, we can upgrade this to the convergence in the local uniform topology, namely, uniform convergence on every compact subset of $[0,\infty)\times \S^D$. Hence, this is the notion of convergence we consider.

\begin{proposition}\label{p.FN_cvg_f}
Assume that $\xi$ is convex on $\S^D_+$. As $N\to\infty$, $\cF_N$ converges in the local uniform topology to the unique viscosity solution $f$ of~\eqref{e.hj} with initial condition $f(0,\cdot)=\psi$ given in~\eqref{e.psi=}.
\end{proposition}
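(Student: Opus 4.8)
The plan is to prove convergence of $\cF_N$ to the viscosity solution via the standard half-relaxed limits (Barles--Perthame) technique, exploiting the uniform Lipschitz bound from Lemma~\ref{l.cF_N_properties} and the approximate equation~\eqref{e.approx_hj}. First I would note that, by the uniform Lipschitzness of $(\cF_N)_N$ together with~\eqref{e.psi=}, the family is precompact in the local uniform topology, and any subsequential limit $f$ satisfies $f(0,\cdot)=\psi$. So it suffices to show that every local-uniform limit point $f$ is a viscosity solution of~\eqref{e.hj}; uniqueness of the solution with initial data $\psi$ (available from \cite[Theorem~1 in Section~10.2]{evans2010partial} via the isometry $\S^D\cong\R^{D(D+1)/2}$) then forces the whole sequence to converge to that solution.

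The main work is verifying the viscosity sub- and supersolution properties of a limit point $f = \lim_k \cF_{N_k}$. For the supersolution property this is easy: the lower bound $\partial_t\cF_N - \xi(\nabla\cF_N)\geq 0$ in~\eqref{e.approx_hj} holds pointwise (where $\cF_N$ is smooth, which is everywhere on $(0,\infty)\times\S^D$ since $\cF_N$ is real-analytic in $(t,x)$), so passing to the limit through a test function touching $f$ from below at an interior point yields $(\partial_t\phi - \xi(\nabla\phi))(t,x)\geq 0$ by the standard argument that derivatives of $\cF_{N_k}$ at the contact point converge to derivatives of $\phi$. For the subsolution property I must control the right-hand side of~\eqref{e.approx_hj}, i.e.\ show that along the relevant points the error terms $C(N^{-1}\Delta\cF_N)^{1/2}$ and $C\,\E|\nabla\hat\cF_N - \nabla\cF_N|$ vanish. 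Here I would use the convexity of $\cF_N$: if $\phi$ is smooth and $f-\phi$ has a strict local maximum at $(t_0,x_0)$, then $\cF_{N_k}-\phi$ has a local maximum at some $(t_k,x_k)\to(t_0,x_0)$, and at such a point $\Delta\cF_{N_k}(t_k,x_k)\leq \Delta\phi(t_k,x_k)$ because a smooth convex function minus a smooth function, at an interior max, has Hessian $\leq$ that of the smooth function in the trace sense; since $\Delta\cF_N\geq 0$ (convexity), this pins $0\leq N_k^{-1}\Delta\cF_{N_k}(t_k,x_k)\leq N_k^{-1}\Delta\phi(t_k,x_k)\to 0$. For the fluctuation term $\E|\nabla\hat\cF_N - \nabla\cF_N|$, recall $\nabla\hat\cF_N = \la R\ra$ and $\nabla\cF_N = \E\la R\ra$, so this is exactly the $L^1$ self-overlap fluctuation; by the Gaussian concentration of $\hat\cF_N$ around its mean (Lipschitz functionals of Gaussians) together with convexity in $x$, one gets $\E|\nabla\hat\cF_N - \nabla\cF_N|\to 0$ after an integration in $x$, and a Fubini/good-set argument lets one assume (perturbing $x_0$ slightly, using that the subsolution inequality need only be checked on a dense set of $x_0$ by continuity) that the contact points $x_k$ can be taken in the set where this holds.

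The genuine obstacle is this last point: the bound~\eqref{e.approx_hj} contains $\E|\nabla\hat\cF_N - \nabla\cF_N|$ pointwise, but concentration only gives smallness after averaging in $x$. I would handle it exactly as in \cite[Section~5]{mourrat2020extending}: introduce the mollified/averaged version of $\cF_N$, or equivalently observe that it is enough to establish the viscosity subsolution inequality at points $x_0$ in a set of full Lebesgue measure, which by continuity of $f$ and of the equation still characterizes $f$ as a subsolution everywhere. Concretely, one shows $\int_{K} \E|\nabla\hat\cF_N - \nabla\cF_N|\,dx \to 0$ on compacts $K$ using the concentration estimate and convexity (bounding the $L^1$ gradient fluctuation by a power of the sup-fluctuation of $\hat\cF_N$ via \cite[Lemma~3.2]{pan}-type convexity arguments, as already done in Section~\ref{s.pertrubation}), hence along a subsequence the integrand $\to 0$ for a.e.\ $x$, and then one runs the contact-point argument at such $x_0$. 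Once both inequalities are in hand, $f$ is a viscosity solution with initial data $\psi$, uniqueness applies, and the proof is complete.
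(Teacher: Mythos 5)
Your overall architecture matches the paper's: Arzel\`a--Ascoli to reduce to showing every subsequential limit is a viscosity solution, then uniqueness; the supersolution half is the easy direction directly from the lower bound in~\eqref{e.approx_hj}; for the subsolution half you need to control $N^{-1}\Delta\cF_N$ and $\E|\nabla\hat\cF_N-\nabla\cF_N|$ at the contact points. Your treatment of the Laplacian term (at a local max of $\cF_N-\phi$, the Hessian of $\cF_N$ is sandwiched between $0$ and the Hessian of $\phi$) is exactly right and is what the paper extracts from the two-sided bound~\eqref{e.hessian.est}.

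The gap is in your handling of the concentration term $\E|\nabla\hat\cF_N-\nabla\cF_N|$. Your proposed Fubini/good-set argument gives $\int_K\E|\nabla\hat\cF_N-\nabla\cF_N|\,\d x\to 0$, hence $\E|\nabla\hat\cF_{N_k}-\nabla\cF_{N_k}|(t_0,x_0)\to 0$ for a.e.\ $x_0$ along a further subsequence. But what you actually need is the vanishing of $\E|\nabla\hat\cF_{N_k}-\nabla\cF_{N_k}|(t_{N_k},x_{N_k})$ at the \emph{moving} contact points $(t_{N_k},x_{N_k})$, which depend on $k$ and only converge to $(t_0,x_0)$. A.e.\ convergence at a fixed $x_0$ does not transfer to a sequence of nearby points, and upgrading to a locally uniform statement fails because the modulus of continuity of $\nabla\cF_N$ is of order $N$: in the \cite[Lemma~3.2]{pan}-type estimate
\begin{align*}
\E\Ll|\nabla\hat\cF_N-\nabla\cF_N\Rr|(x)\le \Ll|\nabla\cF_N(x+y)-\nabla\cF_N(x-y)\Rr|+\frac{C\delta_N}{|y|},
\end{align*}
the first term is uncontrolled pointwise since $\Delta\cF_N = O(N)$ in general. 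Also, "the subsolution inequality need only be checked on a dense set of $x_0$" is not a free step; it requires a perturbation-of-test-function argument that you have not supplied, and even that would only move $(t_0,x_0)$, not the contact points $(t_{N_k},x_{N_k})$.

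The paper closes this gap without any averaging in $x$, by exploiting the contact point itself. Since $\cF_N-\phi$ has a local maximum at $(t_N,x_N)$, convexity of $\cF_N$ and smoothness of $\phi$ give the two-sided bound~\eqref{e.hessian.est}: for all $|y|\le C^{-1}$,
\begin{align*}
0\le \cF_N(t_N,x_N+y)-\cF_N(t_N,x_N)-y\cdot\nabla\cF_N(t_N,x_N)\le C|y|^2,
\end{align*}
with $C$ independent of $N$, i.e.\ an $O(1)$ one-sided Hessian bound on $\cF_N$ at the contact point. Combining this with the convexity inequality $\hat\cF_N(t_N,x_N+y)\ge\hat\cF_N(t_N,x_N)+y\cdot\nabla\hat\cF_N(t_N,x_N)$ and the sup-fluctuation $\delta_N=\E\sup_B|\hat\cF_N-\cF_N|\to 0$ yields the pointwise-at-the-contact-point estimate~\eqref{e.concentration}: $\E|\nabla\hat\cF_N-\nabla\cF_N|(t_N,x_N)\le C\delta_N^{1/2}$. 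This is the missing idea: the contact point is precisely where the Hessian of $\cF_N$ is bounded uniformly in $N$, which is what makes a pointwise argument possible.
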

\begin{proof}
Since $\cF_N$ is Lipschitz uniformly in $N$, the Arzel\`a--Ascoli theorem implies that any subsequence of $(\cF_N)_{N\in\N}$ has a further subsequence that converges in the local uniform topology to some $f$. It suffices to show that the subsequential limit $f$ is the viscosity solution. For lighter notation, we assume that the entire sequence $\cF_N$ converges to $f$.

We divide the proof into two parts, verifying that $f$ is a subsolution in the first part and a supersolution in the second part. It is easy to see that replacing ``local extremum'' by ``strict local extremum'' in the definition of viscosity solutions yields an equivalent definition.

\textit{Part~1.}
Let $(t,x)\in(0,\infty)\times \S^D$ and smooth $\phi$ satisfy that $f-\phi$ has a strict local maximum at $(t,x)$. The goal is to show that 
\begin{equation}
\label{e.eq.phi}
 (\partial_t\phi-\xi(\nabla\phi))(t,x)\leq 0.
\end{equation}
By the local uniform convergence, there exists $(t_N,x_N) \in (0,\infty) \times \S^D$ such that $\cF_N - \phi$ has a local maximum at $(t_N,x_N)$, and $\lim_{N\to\infty}(t_N,x_N)=(t,x)$. Notice that 
\begin{equation}  
\label{e.gradient.equalities}
(\partial_t,\nabla)( \cF_N - \phi)(t_N,x_N) = 0 .
\end{equation}
Throughout this proof, we denote by $C < \infty$ a constant that may vary from one occurrence to the next and is allowed to depend on $(t,x)$ and~$\phi$. 

We want to show that, for every $y \in \S^D$ with $|y| \le C^{-1}$,
\begin{equation}
\label{e.hessian.est}
0 \le \cF_N(t_N,x_N + y) - \cF_N(t_N,x_N ) - y \cdot \nabla \cF_N(t_N,x_N) \le C |y|^2.
\end{equation}
The convexity of $\cF_N$ gives the first inequality. To derive the other, we start by using Taylor's expansion:
\begin{multline}  
\label{e.taylor}
\cF_N(t_N,x_N + y) - \cF_N(t_N,x_N ) 
\\= y \cdot \nabla \cF_N(t_N,x_N) + \int_0^1 (1-s) y\cdot \nabla\Ll(y \cdot \nabla \cF_N\Rr)(t_N,x_N + s y) \, \d s.
\end{multline}
The same holds with $\cF_N$ replaced by $\phi$. By the local maximality of $\cF_N - \phi$ at $(t_N,x_N)$,
\begin{equation*}  \cF_N(t_N,x_N + y) - \cF_N(t_N,x_N ) \le \phi(t_N,x_N + y) - \phi(t_N,x_N ) 
\end{equation*}
holds for every $|y| \le C^{-1}$.
The above two displays along with~\eqref{e.gradient.equalities} imply
\begin{equation*}  \int_0^1 (1-s)  y\cdot \nabla\Ll(y \cdot \nabla \cF_N\Rr)(t_N,x_N + s y)  \, \d s \le \int_0^1 (1-s)  y\cdot \nabla\Ll(y \cdot \nabla \phi\Rr)(t_N,x_N + s y)  \, \d s.
\end{equation*}
Since the function $\phi$ is smooth, the right side of this inequality is bounded by $C |y|^2$. Using~\eqref{e.taylor} once more, we obtain~\eqref{e.hessian.est}. 

Next, setting $B = \Ll\{ (t',x') \in [0,\infty) \times \S^D \ : \ |t'-t| \le C^{-1} \text{ and } |x'-x| \le C^{-1} \Rr\}$ and $\delta_N = \E \Ll[ \sup_B \Ll|\hat\cF_N - \cF_N\Rr|  \Rr]$, we show
\begin{equation}
\label{e.concentration}
\E \Ll[ \Ll|\nabla\hat\cF_N - \nabla \cF_N\Rr|(t_N,x_N) \Rr] \le C \delta_N^\frac 1 2.
\end{equation}
Using the convexity of $\hat\cF_N$ in~\eqref{e.2nd_der_FN}, we have
\begin{equation*}  \hat\cF_N(t_N,x_N+ y) \ge\hat\cF_N(t_N,x_N) + y \cdot \nabla\hat\cF_N(t_N,x_N).
\end{equation*}
Combining this with \eqref{e.hessian.est}, we obtain that, for every $|y| \le C^{-1}$,
\begin{equation*}  y \cdot \Ll(\nabla\hat\cF_N - \nabla \cF_N\Rr)(t_N,x_N)\le 2 \sup_B \Ll|\hat\cF_N - \cF_N\Rr| + C |y|^2 .
\end{equation*}
For some deterministic $\lambda \in [0,C^{-1}]$ to be determined, we fix the random matrix
\begin{equation*}  y = \lambda \frac{\Ll(\nabla\hat\cF_N - \nabla \cF_N\Rr)(t_N,x_N)}{\Ll|\nabla\hat\cF_N - \nabla \cF_N\Rr|(t_N,x_N)},
\end{equation*}
to get
\begin{equation*}  \lambda \Ll|\nabla\hat\cF_N - \nabla \cF_N\Rr|(t_N,x_N)  \le 2 \sup_B \Ll|\hat\cF_N - \cF_N\Rr|  + C \lambda^2 .
\end{equation*}
By the standard concentration result (e.g.\ \cite[Theorem~1.2]{pan}) and an $\eps$-net to cover $B$, we can see $\lim_{\N\to\infty} \delta_N =0$. Taking the expectation in the above display and choosing $\lambda = \delta_N^\frac{1}{2}$, we obtain \eqref{e.concentration}. 

Since~\eqref{e.hessian.est} implies that $|\Delta \cF_N(t_N,x_N)|\leq C$, using~\eqref{e.approx_hj}, \eqref{e.gradient.equalities}, and~\eqref{e.concentration}, we arrive at
\begin{align*}
    \Ll(\partial_t\phi - \xi(\nabla\phi)\Rr)(t_N,x_N)\leq CN^{-\frac{1}{2}} + C\delta^\frac{1}{2}_N.
\end{align*}
Sending $N\to\infty$ and using the convergence of $(t_N,x_N)$ to $(t,x)$, we get~\eqref{e.eq.phi}.

\textit{Part~2.}
Let $(t,x)\in(0,\infty)\times \S^D$ and smooth $\phi$ satisfy that $f-\phi$ has a strict local minimum at $(t,x)$.
Since $\cF_N$ converges locally uniformly to $f$, there is a sequence $((t_N,x_N))_{N\in\N}$ such that $\lim_{N\to\infty}(t_N,x_N)=(t,x)$ and $\cF_N-\phi$ has a local minimum at $(t_N,x_N)$. 
In particular, \eqref{e.gradient.equalities} still holds.
Using these and the lower bound in~\eqref{e.approx_hj}, after sending $N\to\infty$, we obtain
\begin{align*}\Ll(\partial_t\phi-\xi\Ll(\nabla \phi\Rr)\Rr)(t,x)\geq 0,
\end{align*}
which verifies that $f$ is a supersolution and completes the proof.
\end{proof}

Next, we want to restrict the equation~\eqref{e.hj} to a smaller set so that the variational formula for the solution optimizes over the smaller set. We denote by $\S^D_{++}$ the set of positive definite matrices, which is the interior of the closed set $\S^D_+$. We consider
\begin{align}\label{e.hj_cone}
    \partial_t f- \xi(\nabla f) =0 \quad \text{on $\Ll(0,\infty\Rr)\times \S^D_{++}$}. 
\end{align}
We state the well-posedness of this equation and variational representations below. Notice that we do not impose any boundary condition on $\partial\S^D_+$. This is possible by only considering increasing solutions.

\begin{proposition}\label{p.hj_on_cone}
For every Lipschitz  $\psi:\S^D_+\to\R$ that is increasing in the sense that $\psi(x)\leq \psi(x')$ if $x\leqpsd x'$, there is a viscosity solution $f:[0,\infty)\times \S^D_+\to \R$ to~\eqref{e.hj_cone} satisfying $f(0,\cdot) =\psi$, which is unique in the class of increasing and Lipschitz functions. Moreover,
\begin{itemize}
    \item if $\xi$ is convex on $\S^D_+$, then $f$ admits the \emph{Hopf--Lax} representation:
    \begin{align}\label{e.Hopf-Lax}
        f(t,x) = \sup_{y\in \S^D_+}\Ll\{\psi(x+y)-t \xi^*\Ll(\frac{y}{t}\Rr)\Rr\},\quad\forall (t,x)\in [0,\infty)\times \S^D_+;
    \end{align}
    \item if $\psi$ is convex, then $f$ admits the \emph{Hopf} representation:
    \begin{align*}
        f(t,x) =\sup_{z\in\S^D_+}\inf_{y\in\S^D_+}\{z\cdot(x-y)+\psi(y)+t\xi(z)\},\quad\forall (t,x)\in [0,\infty)\times \S^D_+.
    \end{align*}
\end{itemize}
\end{proposition}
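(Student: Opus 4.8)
\emph{Overview and well-posedness.} The plan is to read \eqref{e.hj_cone} as a Hamilton--Jacobi equation on the convex cone $\S^D_+$. The key structural fact, used repeatedly, is that if a smooth $\phi$ touches an increasing function from above or from below at an interior point $x\in\S^D_{++}$, then comparing values at $x\pm hv$ for $v\geqpsd0$ and $h$ small (which stays in the domain) forces $\nabla\phi(x)\cdot v\geq0$, hence $\nabla\phi(x)\geqpsd0$ by \eqref{e.psd_fact}; in particular the equation is insensitive to the values of $\xi$ off $\S^D_+$, and, heuristically, the boundary $\partial\S^D_+$ plays no role. Well-posedness of \eqref{e.hj_cone} in the class of increasing Lipschitz functions is exactly of the form established in \cite{chen2022hamilton2}, which I would invoke; the comparison principle there, which closes precisely because of monotonicity and so requires no datum on $\partial\S^D_+$, is the genuinely delicate point of the whole statement. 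For existence one may also argue by hand: extend $\psi$ to an increasing Lipschitz function $\bar\psi$ on $\S^D$ --- e.g.\ $\bar\psi(x)=\inf\{\psi(z):z\in\S^D_+,\ z\geqpsd x\}$, which one checks is increasing, Lipschitz, equal to $\psi$ on $\S^D_+$, and convex when $\psi$ is --- extend $\xi|_{\S^D_+}$ to a Lipschitz function $\bar\xi$ on $\S^D$ (convex when $\xi$ is convex on $\S^D_+$, as the supremum of its affine minorants based in $\S^D_+$), solve $\partial_t\bar f-\bar\xi(\nabla\bar f)=0$ on $(0,\infty)\times\S^D\cong(0,\infty)\times\R^{D(D+1)/2}$ by the classical Crandall--Lions theory, note that $(t,x)\mapsto\bar f(t,x+he)$ solves the same equation with pointwise larger initial data for $e\geqpsd0$ and $h>0$ so that $\bar f$ is increasing by comparison, and restrict to the cone (where $\nabla\bar f\geqpsd0$, so $\bar\xi(\nabla\bar f)=\xi(\nabla\bar f)$).

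\emph{Hopf--Lax representation.} Let $g(t,x)$ be the right-hand side of \eqref{e.Hopf-Lax}. Because $\xi|_{\S^D_+}$ is Lipschitz, $\xi^*$ (see \eqref{e.xi^*}) is $+\infty$ outside a ball, so the supremum effectively runs over the compact set $\{y\geqpsd0:|y|\leq Ct\}$ and is attained; hence $g$ is finite and Lipschitz. It is increasing since $x\leqpsd x'$ gives $x+y\leqpsd x'+y$, and it attains the initial datum because taking $y=0$ and using $\xi^*(0)=-\inf_{\S^D_+}\xi=0$ gives $g(t,x)\geq\psi(x)$, while the Lipschitzness of $\psi$ gives $g(t,x)\leq\psi(x)+Ct$. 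It then remains to verify that $g$ solves \eqref{e.hj_cone} in the viscosity sense, for which I would prove the dynamic programming identity
\begin{align*}
g(t+s,x)=\sup_{y\in\S^D_+}\Ll\{g(t,x+y)-s\,\xi^*\Ll(\frac{y}{s}\Rr)\Rr\},\qquad\forall\,t,s>0,\ x\in\S^D_+,
\end{align*}
which holds because $\S^D_+$ is a cone (split $y=\frac{s}{t+s}y+\frac{t}{t+s}y$, both summands in $\S^D_+$) and $\xi^*$ is convex; the standard argument then gives the sub- and supersolution properties at every interior point, $\S^D_{++}$ being open. By the uniqueness above, $g=f$.

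\emph{Hopf representation.} Let $h(t,x)$ be the right-hand side of the second bullet; carrying out the inner infimum yields $h(t,x)=\sup_{z\in\S^D_+}\{z\cdot x-\psi^*(z)+t\xi(z)\}$, where $\psi^*$ is the conjugate of $\psi$ over $\S^D_+$. Since $\psi$ is increasing one checks $\psi^*(z)=+\infty$ unless $z\geqpsd0$ (perturb the maximizing argument in a direction $v\geqpsd0$ with $z\cdot v<0$, using \eqref{e.psd_fact}), so $h$ is precisely the Hopf representation of the viscosity solution on $\S^D$ with initial datum the convex increasing Lipschitz extension $\bar\psi$ and Hamiltonian $-\bar\xi$ --- valid for merely continuous $\bar\xi$ by the Hopf--Bardi--Evans formula; convexity of $\psi$ enters only through the identity $h(0,\cdot)=\psi$ on $\S^D_+$ (Fenchel--Moreau, together with the fact that the cone conjugate of $\psi$ agrees with $\bar\psi^{*}$ on $\S^D_+$). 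Since only $z\geqpsd0$ contribute to the supremum, $h$ is increasing, and then the observation from the overview shows its restriction to the cone solves \eqref{e.hj_cone}; uniqueness gives $h=f$. When $\xi$ is also convex, both bullets apply, and since both representations then equal $f$ they coincide --- equivalently, as indicated after Theorem~\ref{t.remove}, one passes between them directly by Fenchel--Moreau duality.
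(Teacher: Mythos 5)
Your proposal takes a genuinely different route from the paper's. The paper's proof of this proposition is a pure citation: it invokes \cite[Theorem~1.2]{chen2022hamilton2} for well-posedness, Lipschitzness, monotonicity, and both variational representations at once, together with \cite[Proposition~B.1]{HBJ} for the Fenchel--Moreau property of the cone $\S^D_+$. You instead give a constructive reduction to the classical Euclidean theory: extend $\psi$ to an increasing Lipschitz $\bar\psi$ on all of $\S^D$ (your infimal extension is sound --- one checks Lipschitzness by shifting $z\mapsto z+|x-x'|I$, and convexity is inherited), extend $\xi|_{\S^D_+}$ to $\bar\xi$, solve on $\S^D\cong\R^{D(D+1)/2}$, use comparison to propagate monotonicity, and restrict. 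This buys more insight into \emph{why} no boundary datum on $\partial\S^D_+$ is needed, and it makes the Hopf--Lax verification (via the cone-compatible dynamic programming identity) and the Hopf verification transparent. Both routes, of course, still rest on a comparison principle that you correctly identify as the real content and also delegate to \cite{chen2022hamilton2}.

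Two small inaccuracies you should fix. First, in the Hopf--Lax part you write that ``$\xi^*$ is $+\infty$ outside a ball'' because $\xi|_{\S^D_+}$ is Lipschitz --- but the hypothesis is only local Lipschitzness, so this can fail (e.g.\ quadratic $\xi$ has finite $\xi^*$ everywhere). The conclusion you want, that the supremand in \eqref{e.Hopf-Lax} tends to $-\infty$ as $|y|\to\infty$ so the sup runs over a compact set, still holds, but for the slightly different reason that $\xi^*$ grows faster than any linear function while $\psi$ is globally Lipschitz. Second, and more substantively, in the Hopf part the claim ``$\psi^*(z)=+\infty$ unless $z\geqpsd 0$'' is false for the cone conjugate $\psi^*(z)=\sup_{y\in\S^D_+}\{z\cdot y-\psi(y)\}$: for $z$ not in $\S^D_+$, the sup is over $y\in\S^D_+$ and one cannot push $y$ in a direction making $z\cdot y\to+\infty$, so $\psi^*(z)$ stays finite (already visible at $D=1$, where $\psi^*(z)=-\psi(0)$ for $z\le 0$). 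Your parenthetical perturbation argument actually goes the wrong way. What is true --- and what you in fact need --- is that the \emph{full} conjugate $\bar\psi^*$ of the monotone extension $\bar\psi$ over all of $\S^D$ satisfies $\bar\psi^*(z)=+\infty$ for $z\notin\S^D_+$ (take $y=-\lambda v$ with $v\in\S^D_+$, $z\cdot v<0$, $\lambda\to\infty$), and that $\bar\psi^*$ agrees with the cone conjugate $\psi^*$ on $\S^D_+$ (which you do state at the end; the proof uses monotonicity of $\psi$ and $z\in\S^D_+$). With that correction, the identification of your cone-Hopf formula with the full-space Hopf formula goes through, and the argument is complete.
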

\noindent In~\eqref{e.Hopf-Lax}, $\xi^*$ is the convex conjugate described in~\eqref{e.xi^*}.

\begin{proof}
This is an extraction of results listed in \cite[Theorem~1.2]{chen2022hamilton2}. Relevant function classes therein are defined in the beginning two paragraphs of \cite[Section~1.1.3]{chen2022hamilton2}. By~\eqref{e.assump_xi}, $\xi$ is increasing on $\S^D_+$, hence satisfying the condition on the nonlinearity of the equation (condition $\mathsf{H}\lfloor_\C \in \Gamma^\nearrow_\mathrm{locLip}$ there; $\mathsf{H}$ and $\C$ there correspond to $\xi$ and $\S^D_+$ here). The existence and uniqueness is in \cite[Theorem~1.2~(2)]{chen2022hamilton2} ($\mathring\C$ corresponds to $\S^D_{++}$ here; uniqueness actually holds in a slightly larger class $\mathfrak{M}\cap\mathfrak{L}_\mathrm{Lip}$). The Lipschitzness of $f$ is in (2a) and the monotonicity follows from (2b) and the main statement of (2) ($f\in \mathfrak{M}$ which is the class of functions increasing in $x$ for each fixed $t$).

Finally, since $\S^D_+$ is a closed convex cone that satisfies the \emph{Fenchel--Moreau} property described in \cite[Definition~6.1]{chen2022hamilton2} which was proved in \cite[Proposition~B.1]{HBJ}, the two representations of the solution are available due to \cite[Theorem~1.2~(2d)]{chen2022hamilton2}.
\end{proof}

\begin{proof}[Proof of Theorem~\ref{t.remove}]
Let $f$ be given by Proposition~\ref{p.FN_cvg_f}. By Lemma~\ref{l.cF_N_properties}, $f$ is also Lipschitz, convex, and increasing and so is $\psi=f(0,\cdot)$ given in~\eqref{e.psi=}. Since $f$ is the viscosity solution of~\eqref{e.hj}, it follows from the definition that $f$ is a viscosity solution of~\eqref{e.hj_cone}. The uniqueness of $f$ follows from Proposition~\ref{p.hj_on_cone}.
Notice that $\cF_N(\frac{1}{2},0) = \frac{1}{N}\E\log\int\exp H_N(\sigma)\d P_N(\sigma)$. Hence, due to the convexity of $\xi$ and $\psi$, the limit of the original free energy is given by the Hopf--Lax and Hopf representations evaluated at $(t,x) = (\frac{1}{2},0)$.
\end{proof}

\section{Enriched models}\label{s.enriched}

Recently, Mourrat initiated a PDE approach to spin glasses \cite{mourrat2021hamilton,mourrat2022parisi,mourrat2020extending,mourrat2021nonconvex,mourrat2023free} (similar considerations also appeared in physics literature \cite{guerra2001sum, genovese2009mechanical, barra2010replica, barra2013mean}). Free energy enriched by a RPC as the additional field is recast as the solution to a Hamilton--Jacobi equation. In this section, we prove the Hopf--Lax representation of the limit free energy for vector spins, Theorem~\ref{t.enriched}, which extends the results in \cite{mourrat2022parisi,mourrat2020extending}.

We start by describing the enriched model. Recall that we write $\sigma=(\sigma_1,\dots,\sigma_N)\in\R^{D\times N}$ where $\sigma_i \in \R^D$ is the $i$-th column vector of $\sigma$. For each $i\in\N$, let $(w^\pi_i(\alpha))_{\alpha\in\supp\mathfrak{R}}$ be an independent copy of $(w^\pi(\alpha))_{\alpha\in\supp\mathfrak{R}}$ (see~\eqref{e.w^pi}) conditioned on $\mathfrak{R}$.
For $(t, \mu) \in [0,\infty)\times \Pi$, we consider
\begin{gather*}
    H^{t,\mu}(\sigma,\alpha) = \sqrt{t}H_N(\sigma) - \frac{t}{2}N\xi\Ll(\frac{\sigma\sigma^\intercal}{N}\Rr) + \sum_{i=1}^N w^\mu_i(\alpha)\cdot \sigma_i  -\frac{1}{2}\mu(1)\cdot\sigma\sigma^\intercal,
    \\
    F_N(t,\mu) = \frac{1}{N}\E\log\iint \exp\Ll( H^{t,\mu}(\sigma,\alpha) \Rr)\d P_N(\sigma)\d \mathfrak{R}(\alpha).
\end{gather*}
To describe the limit, we set
\begin{align*}
    \psi(\mu) = \E\log\iint \exp\Ll(w^\mu\cdot \tau - \frac{1}{2}\mu(1)\cdot\tau\tau^\intercal\Rr)\d P_1(\tau)\d \mathfrak{R}(\alpha)
\end{align*}
which is equal to $F_N(0,\mu)$ for every $N$.
Recall $\xi^*$ defined in \eqref{e.xi^*}.
\begin{theorem}\label{t.enriched}
If $\xi$ is convex, then for every $(t,\mu)\in[0,\infty)\times \Pi$,
\begin{align*}
    \lim_{N\to\infty} F_N(t,\mu) = \inf_{\pi\in\Pi} \Ll\{\psi(\mu+ t\nabla\xi\circ\pi) + \frac{t}{2}\int_0^1\xi^*\Ll(\nabla\xi(\pi(s))\Rr)\d s\Rr\}.
\end{align*}
\end{theorem}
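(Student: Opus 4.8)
\textbf{Proof proposal for Theorem~\ref{t.enriched}.}

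The plan is to mimic the strategy used to prove Theorems~\ref{t.main} and~\ref{t.remove}, now carried out with the RPC-valued external field present throughout, and then convert the resulting Parisi-type formula into the claimed Hopf--Lax form via the finite-dimensional Hamilton--Jacobi equation. First I would establish the analogue of Theorem~\ref{t.main} for $F_N(0,\mu)$ tilted appropriately: in fact $F_N(0,\mu)=\psi(\mu)$ identically in $N$, so the base point of the interpolation is already explicit and no limit is needed there. Next I would set up the interpolation in the $t$-variable. Define $\cF_N(t,\mu)$ as above and observe, exactly as in Lemma~\ref{l.cF_N_properties} and Proposition~\ref{p.upp_bd}, that Guerra's interpolation between $H^{t,\mu}$ and an RPC-cascade term governed by $\nabla\xi\circ\pi$ produces a remainder of the form $-\tfrac12\E\la \xi(R^{1,2})-\nabla\xi(\pi(Q^{1,2}))\cdot R^{1,2}+\theta(\pi(Q^{1,2}))\ra$, with all self-overlap terms cancelled by the correction $-\tfrac t2 N\xi(\sigma\sigma^\intercal/N)$ and $-\tfrac12\mu(1)\cdot\sigma\sigma^\intercal$. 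Convexity of $\xi$ makes this nonpositive, giving the upper bound $\limsup_N F_N(t,\mu)\le \inf_\pi\{\psi(\mu+t\nabla\xi\circ\pi)+\tfrac t2\int_0^1\theta(\pi(s))\,\d s\}$ — note that here $\int_0^1\theta(\pi(s))\d s$ must be reconciled with $\int_0^1\xi^*(\nabla\xi(\pi(s)))\d s$, and indeed by \eqref{e.theta} and \eqref{e.xi^*}, $\theta(a)=a\cdot\nabla\xi(a)-\xi(a)=\xi^*(\nabla\xi(a))$ whenever $a\in\S^D_+$ is a point where the sup in $\xi^*$ is attained at $a$, which holds by convexity. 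So the two forms of the $\int_0^1$ term agree.

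For the matching lower bound I would reuse Section~\ref{s.pertrubation} and Section~\ref{s.lower_bd} verbatim: add the same perturbation Hamiltonian $N^{-1/16}H^{\pert,x}_N$, which forces self-overlap concentration and the Ghirlanda--Guerra identities (Proposition~\ref{p.perturbation} is insensitive to the extra deterministic-in-$\mathfrak R$ field $\sum_i w^\mu_i(\alpha)\cdot\sigma_i$ since that only changes the reference measure), then run the Aizenman--Sims--Starr cavity computation of Lemma~\ref{l.cavity}. The cavity term now carries an extra contribution from the $w^\mu$ field; combined with the synchronization result \cite[Theorem~4]{pan.vec} and the RPC approximation \eqref{e.QtoR}, the subsequential limit is identified with $\mathcal P(\mathfrak R, R)$ for an overlap structure of the form $R^{l,l'}=\Psi(\zeta(\alpha^l\wedge\alpha^{l'}))$ plus a diagonal correction, where now the effective cascade field is $w^{\mu+t\nabla\xi\circ\pi}$ for the relevant $\pi=\Psi\circ\zeta_m$. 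This yields $\liminf_N F_N(t,\mu)\ge \inf_\pi\{\psi(\mu+t\nabla\xi\circ\pi)+\tfrac t2\int_0^1\xi^*(\nabla\xi(\pi(s)))\d s\}$, matching the upper bound. At this stage one has the Parisi-type formula; it remains only to recognize it as the Hopf--Lax formula.

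To do that I would introduce, in parallel with Section~\ref{s.remove}, the function $g_N(t,x):=\cF_N(t,\mu+x)$ for $x$ ranging over an appropriate domain (or more precisely work with the limit $f(t,\mu)$ directly as a functional on $[0,\infty)\times\Pi$, using the infinite-dimensional setup of \cite{mourrat2022parisi,mourrat2021nonconvex,mourrat2023free,chen2022hamilton}), show it solves the Hamilton--Jacobi equation $\partial_t f - \int_0^1\xi^*\big((\nabla f)(s)\big)\,\d s$-type equation — actually the nonlinearity is $\int_0^1 \xi(\partial_\mu f)$, with the infinite-dimensional gradient taken in the cone $\Pi$ — and then apply the Hopf--Lax formula in that cone. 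The conversion $\inf_\pi\{\psi(\mu+t\nabla\xi\circ\pi)+\tfrac t2\int_0^1\xi^*(\nabla\xi(\pi(s)))\d s\}=\inf_{\text{paths}}\{\psi(\mu+\text{increment})-t\int_0^1\xi^*(\text{velocity}/t)\}$ is a change of variables $\pi\mapsto t\nabla\xi\circ\pi$ combined with the identity $\xi^*\circ\nabla\xi=\theta$ on $\S^D_+$. The main obstacle I anticipate is the lower bound's cavity computation in the presence of the $w^\mu$ field: one must check that the synchronization and RPC-approximation machinery still applies when the Gibbs measure already contains an external cascade field, i.e.\ that the limiting overlap structure is still governed by a single monotone path and that the extra field contributes precisely $\psi(\mu+t\nabla\xi\circ\pi)$ and not some more complicated coupled functional. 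This is handled in \cite{mourrat2022parisi,mourrat2020extending} and should go through, but it is the step requiring the most care; the upper bound and the PDE-to-variational-formula conversion are routine given the tools already developed in Sections~\ref{s.upper_bd} and~\ref{s.remove}.
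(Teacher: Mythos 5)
Your overall skeleton (Guerra upper bound with self-overlap cancellation; perturb to get Ghirlanda--Guerra; cavity argument with synchronization for the lower bound) matches the paper's sketch, and your observation that $\theta=\xi^*\circ\nabla\xi$ on $\S^D_+$ under convexity is exactly how the paper reconciles the two forms of the integrand. But there are two genuine problems with the lower bound, and a third structural confusion at the end.

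The main gap: you cannot reuse the perturbation of Section~\ref{s.pertrubation} \emph{verbatim}. That perturbation only contains $\sigma$-dependent terms, so the Ghirlanda--Guerra identities it delivers concern the $\sigma$-overlap array $R_\sigma$ alone. In the enriched model the Gibbs measure samples pairs $(\sigma,\alpha)$, and the synchronization you need is for the \emph{joint} overlap $R^{l,l'}=(R^{l,l'}_\sigma,R^{l,l'}_\alpha)$ with $R^{l,l'}_\alpha=\alpha^l\wedge\alpha^{l'}$. Without $\alpha$-dependent perturbation terms you cannot force joint Ghirlanda--Guerra identities, and hence cannot apply synchronization to couple the $\sigma$-overlap to the RPC overlap. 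The paper fixes this by enlarging the perturbation to processes $H^h_N(\sigma,\alpha)$ with covariance $N\bigl(a_{h_1}\cdot(\sigma\sigma'^\intercal/N)^{\odot h_2}+\lambda_{h_3}\,\alpha\wedge\alpha'\bigr)^{h_4}$ indexed by $h\in\N^4$. Your remark that the $w^\mu$ field ``only changes the reference measure'' misdiagnoses the issue: the reference measure on $\alpha$ does not change, but the overlap array whose structure you must control is now two-component. Relatedly, you invoke \cite[Theorem~4]{pan.vec} to synchronize $R^{1,2}_\sigma$ against $\tr(R^{1,2}_\sigma)$; in the enriched case you must synchronize against $\tr(R^{1,2}_\sigma)+R^{1,2}_\alpha$ and then use the invariance property of the RPC (so that $\Psi_\alpha\circ\zeta=\mathrm{id}$) to conclude that the $\alpha$-component of the limiting array is literally $\alpha^l\wedge\alpha^{l'}$, which is what makes the external field appear as $w^{\mu+t\nabla\xi\circ\pi}$ rather than some reparametrized cascade. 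This identification is the crux of your self-described ``main obstacle,'' and it requires the enlarged perturbation; it does not just ``go through.''

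Separately, your final paragraph proposing an additional Hamilton--Jacobi step (introducing $g_N(t,x)=\cF_N(t,\mu+x)$, passing to an infinite-dimensional Hopf--Lax formula) is unnecessary and somewhat circular. Once the upper bound reaches $\limsup F_N(t,\mu)\le\inf_\pi\{\psi(\mu+t\nabla\xi\circ\pi)+\tfrac t2\int_0^1\theta(\pi(s))\,\d s\}$ and the lower bound reaches the matching $\liminf\ge$, the theorem is proved directly, since $\theta=\xi^*\circ\nabla\xi$. There is no remaining ``Parisi-type formula'' to convert: the formula you obtain \emph{is} the claimed Hopf--Lax expression. The paper's proof of Theorem~\ref{t.enriched} is indeed direct in this sense, in contrast to \cite{mourrat2020extending} where the conversion step is needed; one of the points of this paper is to avoid it.
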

We explain the relationship between the functional inside the infimum and $\mathscr{P}(\pi)$ in \eqref{e.parisi_functional}.
Recall the definition of $\theta$ in \eqref{e.theta}. The convexity of $\xi$ implies $\xi^*(\nabla\xi(\cdot))=\theta$. Therefore, we have $\mathscr{P}(\pi) = \psi(\nabla\xi\circ\pi)+\frac{1}{2}\int_0^1\xi^*\Ll(\nabla\xi(\pi(s))\Rr)\d s$ as expected, since $F_N(1,0) = F_N$.

When $D=1$, in the mixed $p$-spin case, $\nabla\xi$ maps $\S^1_+=[0,\infty)$ back to itself. Hence, writing $\nu = t\nabla\xi\circ\pi$, we can rewrite the formula in the above theorem as
\begin{align*}
    \inf_{\nu\in\Pi}\Ll\{\psi(\mu+\nu)+\frac{t}{2}\int_0^1\xi^*(\nu(s))\d s\Rr\}
\end{align*}
which recovers \cite[Theorem~1.1]{mourrat2020extending} (actually, the above optimizes over a smaller set). The proof in \cite{mourrat2020extending} is based on verifying the Parisi formula for the enriched free energy without self-overlap correction (\cite[Section~4]{mourrat2020extending}) and then transforming the Parisi formula into the Hopf--Lax formula. Using our arguments, we can offer a direct proof. 
To avoid repetition, we only sketch key steps.

\begin{proof}[Sketch of the proof of Theorem~\ref{t.enriched}]
We sketch key steps for the upper bound, the perturbation, and the lower bound. The details for the latter two can be seen in~\cite[Corollary 6.11]{HJ_critical_pts} (also see~\cite{chen2022pde} for the case $D=1$).

\textit{Upper bound.}
For $r\in[0,1]$, we set
\begin{align*}
    H^r_N(\sigma,\alpha) = H^{rt,\mu}_N(\sigma,\alpha)+\sqrt{1-r}\sum_{i=1}^N w^{t\nabla\xi\circ\pi}_i(\alpha)\cdot \sigma_i - \frac{1-r}{2}t\nabla\xi\circ\pi(1)\cdot\sigma\sigma^\intercal
    \\+\sqrt{rNt}y(\alpha)-\frac{rN}{2}t\theta(\pi(1))
\end{align*}
similar to~\eqref{e.H^t_upper_bd} with $(y(\alpha))_\alpha$ defined above~\eqref{e.H^t_upper_bd}. Then, we define $\varphi$ as in~\eqref{e.phi(r)} but for $H^r_N(\sigma,\alpha)$ in the above. Then, we can compute the derivative of $\varphi$ to get a result similar to~\eqref{e.dphi(r)}. We can then follow the computation below~\eqref{e.dphi(r)} to verify the upper bound.

\textit{Perturbation.}
In addition to $\sigma$, we need to add terms in the perturbation for $\alpha$.
Again, let $(a_n)_{n\in\N}$ enumerate $\{q\in\S^D_+:|q|\leq 1\}\cap \Q^{D\times D}$. Additionally, let $(\lambda_n)_{n\in\N}$ enumerate $[0,1]\cap \Q$.
For $N\in\N$ and $h\in \N^4$, let $(H^h_N(\sigma,\alpha))_{\sigma\in \R^{D\times N},\,\alpha\in\supp\mathfrak{R}}$ be an independent centered Gaussian process with variance
\begin{align*}\E H^h_N(\sigma,\alpha)H^h_N(\sigma',\alpha') = N\Ll(a_{h_1}\cdot \Ll(\frac{\sigma\sigma'^\intercal}{N}\Rr)^{\odot h_2}+\lambda_{h_3}\alpha\wedge\alpha'\Rr)^{h_4}.
\end{align*}
Recall $g_i(\sigma)$ for $i\in\{1,\dots, D(D+1)/2\}$ introduced above~\eqref{e.H^pert}. For $x=((x_h),(x_i))\in [0,3]^{\N^4}\times[0,3]^{ \frac{D(D+1)}{2}}$, we set
\begin{align*}
    H^{\pert, x}_N(\sigma,\alpha) = \sum_{h\in\N^4}x_hc_h H^h_N(\sigma,\alpha)+\sum_{i=1}^{D(D+1)/2}x_ig_i(\sigma)
\end{align*}
where $c_h$ is a positive constant chosen sufficiently small so that the variance of $c_h H^h_N(\sigma)$ is bounded by $2^{-2|h|}$ for $|h|=h_1+h_2+h_3+h_4$.

For $l,l'\in\N$, we set $R^{l,l'}_\sigma = \frac{\sigma^l(\sigma^{l'})^\intercal}{N}$ and $R^{l,l'}_\alpha = \alpha^l\wedge\alpha^{l'}$. The entire overlap $R^{l,l'} = \Ll(R^{l,l'}_\sigma,\,R^{l,l'}_\alpha\Rr)$ is now $\R^{D\times D}\times \R$-valued. Set $R^{l,l'}_h = \Ll(a_{h_1}\cdot \Ll(R^{l,l'}_\sigma\Rr)^{\odot h_2}+\lambda_{h_3}R^{l,l'}_\alpha\Rr)^{h_4}$ for each $h$ and $R^{\leq n} =\Ll(R^{l,l'}\Rr)_{l,l'\leq n}$ for each $n$. Then, the concentration of $R^{l,l}$ and the Ghirlanda--Guerra identities still hold asymptotically as stated in Proposition~\ref{p.perturbation}.

\textit{Lower bound.}
Let $F^x_N(t,\mu)$ be the free energy associated with $H^{t,\mu}_N + N^{-\frac{1}{16}}H^{\pert,x}_N$. For $(Z(\sigma))_\sigma$ and $(Y(\sigma))_\sigma$ defined above~\eqref{e.A_N(x)}, we have that, up to an $\mathcal{O}(N^{-1})$ error and uniformly in $x$, $(N+1)F^x_{N+1}(t,\mu) - NF^x_N(t,\mu)$ is equal to
\begin{align*}
    & A_N(x) 
    \\
    &= \E \log\la \int \exp\Ll(\Ll(\sqrt{t}Z(\sigma)+w^\mu(\alpha)\Rr)\cdot \tau - \frac{1}{2}\Ll(t\nabla\xi\Ll(\frac{\sigma\sigma^\intercal}{N}\Rr)+\mu(1)\Rr)\cdot \tau\tau^\intercal \Rr)  \d P_1(\tau)\ra_x  \notag
    \\
    &\quad - \E \log \la \exp\Ll(\sqrt{t}Y(\sigma)
    - \frac{1}{2}t\theta\Ll(\frac{\sigma\sigma^\intercal}{N}\Rr)\Rr) \ra_x.
\end{align*}
Compared with the previous cavity computation~\eqref{e.A_N(x)}, the additional term is $ w^\mu(\alpha)\cdot \tau -\frac{1}{2}\mu(1)\cdot\sigma\sigma^\intercal$ and rescaling in $t$.

Passing to a subsequence, we can assume that $\Ll(R^{l,l'}\Rr)_{l,l'\in\N}$ converges weakly to some $\Ll(R^{l,l'}_\infty\Rr)_{l,l'\in\N}$ which satisfies the Ghirlanda--Guerra identities. We write $R^{l,l'}_\infty=\Ll(R^{l,l'}_{\infty,\sigma},R^{l,l'}_{\infty,\alpha}\Rr)$ where $\sigma,\alpha$ are purely symbolic. Let $\zeta$ be the quantile function of $\tr\Ll(R^{1,2}_{\infty,\sigma}\Rr) + R^{1,2}_{\infty,\alpha}$. By synchronization, there is $(\Psi_\sigma,\Psi_\alpha):[0,\infty)\to \S^D_+\times [0,1]$ such that $R^{l,l'}_\infty$ has the law of $(\Psi_\sigma(\zeta(s)),\Psi_\alpha(\zeta(s)))$ for $s$ uniformly distributed over $[0,1]$. By the invariance of the RPC (\cite[Theorem~4.4]{pan} and also \cite[Proposition 4.8]{HJ_critical_pts}), since $\alpha^1\cdot\alpha^2$ has uniform law over $[0,1]$, we have that $\Psi_\alpha(\zeta(s))=s$. Therefore, we can represent $R_\infty$ as follows (comparing this with~\eqref{e.R^l,l'}): for every $l,l'\in\N$,
\begin{align*}
    R^{l,l'}_{\infty,\sigma} = \Psi_\sigma\Ll(\zeta\Ll(\alpha^l\wedge\alpha^{l'}\Rr)+\mathds{1}_{l=l'}\Ll(r-\zeta\Ll(\alpha^l\wedge\alpha^{l'}\Rr)\Rr)\Rr),\qquad R^{l,l'}_{\infty,\alpha}=\alpha^l\wedge\alpha^{l'},
\end{align*}
where $r\in\R$ satisfies $r\geq\zeta$ and $(\alpha^l)_{l\in\N}$ is sampled from $\mathfrak{R}^{\otimes\infty}$. We can approximate $R_\infty$ by
\begin{align*}
    Q^{l,l'}_m = \Ll(\pi_m\Ll(\alpha^l\wedge\alpha^{l'}\Rr),\ \alpha^l\wedge\alpha^{l'}\Rr)
\end{align*}
for some sequence $(\pi_m)_{m\in\N}$ in $\Pi$.
Then, we can carry out the same procedure after~\eqref{e.QtoR} to see that the $\liminf$ of $A_N$ is approximately bounded from below by
\begin{align*}
    \E \log\la \int \exp\Ll( \Ll(\sqrt{t}Z(\alpha)+w^\mu(\alpha)\Rr)\cdot \tau - \frac{1}{2}\Ll(t\nabla\xi\circ\pi_m\Ll(1\Rr)+\mu(1)\Rr)\cdot \tau\tau^\intercal \Rr)  \d P_1(\tau)\ra_x  \notag
    \\
    - \E \log \la \exp\Ll( \sqrt{t}Y(\alpha)
    - \frac{1}{2}t\theta\circ\pi_m(1) \Rr)\ra_x
\end{align*}
as $m\to\infty$, where $( Z(\alpha))_\alpha$ and $( Y(\alpha))_\alpha$ are centered Gaussian processes with variances $\E Z(\alpha)Z(\alpha')^\intercal = \nabla\xi\circ \pi_m(\alpha\wedge\alpha')$ and $\E Y(\alpha)Y(\alpha') = \theta\circ\pi_m(\alpha\wedge\alpha')$. Note that $\sqrt{t}Z(\alpha)+w^\mu(\alpha)$ has the same law of $w^{\mu+ t\nabla\xi\circ\pi_m}$. This gives the matching lower bound.
\end{proof}

\small
\bibliographystyle{abbrv}
\newcommand{\noop}[1]{} \def\cprime{$'$}

\end{document}